\title{Convergence rate of the data-independent $P$-greedy algorithm in kernel-based approximation}
\author[1]{G. Santin \thanks{santinge@mathematik.uni-stuttgart.de, 
orcid.org/0000-0001-6959-1070}}
\author[1]{B. Haasdonk \thanks{haasdonk@mathematik.uni-stuttgart.de}}
\affil[1]{Institute for Applied Analysis and Numerical Simulation, University of Stuttgart, Germany}
\begin{document}
\maketitle
\begin{abstract}
Kernel-based methods provide flexible and accurate algorithms for the reconstruction of functions from meshless samples. A major question in the use of such 
methods is the influence of the samples locations on the behavior of the approximation, and feasible optimal strategies are not known for general 
problems. 

Nevertheless, efficient and greedy point-selection strategies are known. This paper gives a proof of the convergence rate of the data-independent 
\textit{$P$-greedy} algorithm, based 
on the application of the convergence theory for greedy algorithms in reduced basis methods. The resulting rate of convergence is shown to be near-optimal in 
the case of kernels generating Sobolev spaces.

As a consequence, this convergence rate proves that, for kernels of Sobolev spaces, the points selected by the algorithm are asymptotically uniformly 
distributed, as conjectured in the paper where the algorithm has been introduced. 
\end{abstract}

\section{Introduction}\label{sec:introduction}
We start by recalling some basic facts of kernel based approximation. Further details and a thorough treatment of the topic can be found e.g. in the 
monographs \cite{Wendland2005, Fasshauer2007, Buhmann2003, Fasshauer2015}.

On a compact set $\Omega\subset\R^d$ we consider a continuous, symmetric and strictly positive definite kernel $K:\Omega\times\Omega\to\R$. Positive 
definiteness is understood in terms of the associated \textit{kernel matrix}, i.e., for all $n\in\N$ and $\{x_1, \dots, x_n\}\subset\Omega$ pairwise distinct 
the kernel matrix $A\in \R^{n\times n}$, $A_{ij} := K(x_i, x_j)$, is positive definite.

Associated with the kernel there is a uniquely defined \textit{native space} $\ns$, that is, the 
unique Hilbert 
space of functions from $\Omega$ to $\R$ in which $K$ is the \textit{reproducing kernel}, i.e., 
\begin{enumerate}[(a)]
 \item $K(\cdot, x) \in\ns$ for all $x\in\Omega$,
 \item $(f, K(\cdot, x)) = f(x)$ for all $f\in\ns$, $x\in\Omega$.
\end{enumerate}
We used here and we will use in the following the notation $(\cdot, \cdot)$ and $\|\cdot\|$, 
without subscripts, for the inner product and norm of $\ns$.

For any given finite set $X_n := \{x_1,\dots,x_n\}\subset \Omega$ of $n$ pairwise distinct points, the interpolation of a function $f\in\ns$ on $X_n$ is well 
defined being the kernel strictly positive definite, and it coincides with the orthogonal projection $\Pi_{V(X_n)}(f)$ of $f$ into $V(X_n)$, where $V(X_n) := 
\Sp{K(\cdot, x_k), 1\leq k \leq n}$ is the $n$-dimensional subspace of $\ns$ generated by the 
kernel translates on $X_n$. We will denote by $|\cdot|$ the number of pairwise distinct elements of 
a finite set, 
i.e., $|X_n| := n$.

Since $\Pi_{V(X_n)}(f)\in V(X_n)$, the interpolant is of the form 
$$
\Pi_{V(X_n)}(f) : = \sum_{k=1}^n \alpha_k K(\cdot, x_k),
$$ 
for some coefficients 
$\{\alpha_k\}_{k=1}^n$. To actually compute these, one imposes the interpolation conditions 
$\Pi_{V(X_N)}(f)(x_i) = f(x_i)$, $1\leq i\leq n$,  which result in the linear system 
\begin{equation}\label{eq:kernel_matrix}
A \alpha = b, 
\end{equation}
which has in fact a unique solution for all $b\in\R^n$, $b_i:=f(x_i)$, $A$ being positive definite.

The standard way to measure the interpolation error is by means of the \textit{Power Function} $P_{V(X_n)}$, which is defined in a point $x\in\Omega$ as the 
norm of the pointwise interpolation error at $x$, i.e., 
\begin{equation}\label{eq:power_definition}
P_{V(X_n)}(x) := \sup_{f\in \ns, f\neq 0} \frac{|f(x) - \Pi_{V(X_n)}(f)(x)|}{\|f\|},
\end{equation}
and it is a continuous function on $\Omega$, vanishing only on $X_n$. Among other equivalent definitions of the Power Function (e.g., by considering a 
\textit{cardinal basis} $\{\ell_k\}_{k=1}^n$ of $V(X_n)$, i.e., $\ell_k(x_i) = 
\delta_{ki}$), the present one is 
easier to generalize to the setting considered in Section \ref{sec:power_and_width}. 
From the definition, it is immediate to see that bounds on the 
maximal value of the Power Function in $\Omega$ provide uniform bounds on the interpolation error as
\begin{equation}\label{pf_error}
\left\|f - \Pi_{V(X_n)}(f)\right\|_{L_{\infty}(\Omega)}\leq  \left\|P_{V(X_n)}\right\|_{L_{\infty}(\Omega)} \|f\|,\; f\in\ns.
\end{equation}

It is thus of interest to find and characterize point sets $X_n$ which guarantee a small value of $\left\|P_{V(X_n)}\right\|_{L_{\infty}(\Omega)}$, and the 
reason is twofold. If one is free to consider any point in $\Omega$, selecting good points means to 
construct an optimal or suboptimal discretization of the 
set with respect to kernel approximation. On the other hand, if a set of data points $X_N\subset 
\Omega$ 
is provided (e.g., the location of the measurements 
coming from an 
application), it is often desirable to be able to select a subset $X_n\subset X_N$, $n\ll N$, of the full data to reconstruct a sparse approximation of the 
unknown function, where sparsity is understood both in terms of the underlining linear 
system and in a functional sense. Indeed, selecting $X_n\subset X_N$ means to solve the system 
\eqref{eq:kernel_matrix} with respect to the submatrix defined by the small point set, which can be used to define a  
sparse approximation of the full kernel matrix. On the other hand, the resulting interpolant (or 
model of the data) is given by an expansion of only $n$ out of $N$ kernel translates, and this 
means that its evaluation is cheaper and more suitable to be used as a surrogate model of the data.

Although feasible selection criteria to construct an optimal set $X_n$ are generally not known, 
different greedy techniques have been presented to construct 
\textit{near optimal} points (see \cite{DeMarchi2005, Pazouki2011, Wirtz2013}). They are based on the idea 
that it is possible to construct good sequences of nested sets of points starting from the empty set 
$X_0:=\emptyset$, and iteratively increasing the set as $X_n:=X_{n-1}\cup\{x_n\}$ by adding a new 
point chosen to maximize a certain indicator. The resulting algorithms all share the same 
structure, while the choice of the point selection criteria is different. 

Among various 
methods, we will concentrate here on the so-called 
\textit{$P$-greedy} algorithm which has been introduced in \cite{DeMarchi2005}. It is a 
\textit{data independent} algorithm, meaning that the selection of the points is made by only 
looking at $K$ and $\Omega$ (and possibly $X_N$), but not at the samples of a particular 
function $f\in\ns$, and it thus produces point sets which provide uniform approximation errors for 
\textit{any} function $f\in\ns$. To be more precise, the selection criterion picks at every 
iteration the point in $\Omega\setminus X_{n-1}$ which maximizes the Power Function 
$P_{V(X_{n-1})}$. By adding this point to the set $X_{n-1}$, the new Power Function $P_{V(X_n)}$ 
vanishes at $x_n$, and indeed, as we will explain later, $\|P_{V(X_n)}\|_{L_{\infty}(\Omega)}\leq 
\|P_{V(X_{n-1})}\|_{L_{\infty}(\Omega)}$.

The goal of this paper is to prove that the points produced by this algorithm are indeed 
near-optimal, meaning that they have the same asymptotic decay of the best known, non greedy point 
distributions. In particular, in the paper \cite{DeMarchi2005}, the authors considered the case of 
translational invariant  and Fourier transformable kernels on domains satisfying an interior cone 
condition, for which the asymptotic decay of the Power Function is well understood for certain 
point distributions. We remark that Radial Basis Functions are instances of such kernels. In this 
setting, in the paper \cite{DeMarchi2005} the following decay rate for the $P$-greedy algorithm has been shown, 
which is, up to our knowledge, the currently sharpest known convergence statement. 
\begin{theorem}\label{th:pgreedy_demscwe}
If $\Omega$ is compact in $\R^d$ and satisfies an interior cone condition, and $K\in\mathcal C^2(\Omega_1\times\Omega_1)$, with $\Omega\subset\Omega_1$, 
$\Omega_1$ compact and convex, then the point sets $\{X_n\}_n$ selected by the $P$-greedy algorithm have Power Functions such that, for any 
$n\in\N$, 
$$
\|P_{V(X_n)}\|_{L_{\infty}(\Omega)} \leq c n ^{-\frac1d},
$$
for a constant $c$ not depending on $n$.
\end{theorem}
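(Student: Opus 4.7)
The plan is to combine a pointwise upper bound on the Power Function in terms of the distance to the sample set, exploiting the $\mathcal{C}^2$-regularity of $K$, with a volume/ball-packing argument that forces the separation distance of the greedy set to decay at rate $n^{-1/d}$. Tied together by the $P$-greedy selection rule, these two ingredients yield the desired rate on $\|P_{V(X_n)}\|_{L_{\infty}(\Omega)}$.

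The first step is a Lipschitz-in-distance bound on $P_{V(X)}$. By the reproducing property, the Power Function admits the variational characterization $P_{V(X)}(x)^2 = \inf_{u\in V(X)} \|K(\cdot,x) - u\|^2$. Choosing the trial function $u = K(\cdot, x_{j^*})$, with $x_{j^*} \in X$ a nearest point to $x$, yields
\begin{equation*}
    P_{V(X)}(x)^2 \leq K(x,x) - 2 K(x,x_{j^*}) + K(x_{j^*}, x_{j^*}).
\end{equation*}
The symmetry of $K$ forces the gradient of the right-hand side to vanish at the diagonal point $y=x_{j^*}$, so a second-order Taylor expansion along the segment $[x_{j^*}, x] \subset \Omega_1$ (well defined thanks to the convexity of $\Omega_1$) gives an $O(\|x-x_{j^*}\|^2)$ bound with a constant depending only on the $\mathcal{C}^2$-norm of $K$ on $\Omega_1\times\Omega_1$. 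Hence
\begin{equation*}
    P_{V(X)}(x) \leq C_K\, d(x,X), \qquad d(x,X):=\min_{x_i\in X}\|x-x_i\|.
\end{equation*}

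The second step uses this to control the separation distance of the greedy set. By the $P$-greedy rule, $x_j$ maximizes $P_{V(X_{j-1})}$, and $P_{V(X_{j-1})}(x_i)=0$ for every $i<j$. Applying the previous step at $x_j$ with $X=X_{j-1}$ gives, for any such pair,
\begin{equation*}
    \|P_{V(X_{j-1})}\|_{L_{\infty}(\Omega)} = P_{V(X_{j-1})}(x_j) \leq C_K\,\|x_j - x_i\|.
\end{equation*}
Minimising over $i<j\leq n$ and using the monotonicity of $n\mapsto \|P_{V(X_n)}\|_{L_{\infty}(\Omega)}$, I obtain the separation estimate $q_{X_n}:= \tfrac{1}{2}\min_{i\neq j}\|x_i - x_j\| \geq (2C_K)^{-1}\|P_{V(X_{n-1})}\|_{L_{\infty}(\Omega)}$. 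On the other hand, the open balls $B(x_i, q_{X_n})$ are pairwise disjoint, and the interior cone condition on $\Omega$ ensures that each of them meets $\Omega$ in a set of volume at least $c\, q_{X_n}^d$. Summing over $i=1,\dots,n$ yields $n\,c\,q_{X_n}^d \leq \mathrm{vol}(\Omega)$, i.e.\ $q_{X_n}\leq c'\,n^{-1/d}$. Chaining the two estimates then gives $\|P_{V(X_{n-1})}\|_{L_{\infty}(\Omega)} \leq 2 C_K c'\, n^{-1/d}$, which is the claim after a harmless index shift.

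The main obstacle is the first step: obtaining the uniform constant $C_K$ without translation invariance or any Fourier-transform tool. Everything has to be routed through the $\mathcal{C}^2$-norm of $K$ on the compact convex superset $\Omega_1\times\Omega_1$, and this is precisely where the convexity of $\Omega_1$ enters, by giving access to Taylor's theorem along segments. Once this is secured, the separation and packing arguments are standard geometric facts independent of the specific structure of $K$, and the interior cone condition plays only a book-keeping role in the final volume bound.
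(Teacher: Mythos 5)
Your proposal is essentially correct, but be aware that the paper itself gives no proof of Theorem \ref{th:pgreedy_demscwe}: it is quoted as a known result from \cite{DeMarchi2005}, and what you have reconstructed is, in substance, the original argument of that reference. Its two ingredients are exactly the ones you isolate: first, the bound $P_{V(X)}(x)\le C_K\,\mathrm{dist}(x,X)$, obtained from $P_{V(X)}(x)^2\le K(x,x)-2K(x,x_{j^*})+K(x_{j^*},x_{j^*})$ and a second-order Taylor expansion along the segment $[x_{j^*},x]\subset\Omega_1$ — to be precise, the right-hand side, viewed as a function of $x$ with $x_{j^*}$ fixed, vanishes together with its gradient at $x=x_{j^*}$ because $(\nabla_1K)(y,y)=(\nabla_2K)(y,y)$ by symmetry, which is the correct reading of your slightly loose ``diagonal point $y=x_{j^*}$'' remark; second, the greedy/separation/packing step giving $q_{X_n}\ge(2C_K)^{-1}\|P_{V(X_{n-1})}\|_{L_\infty(\Omega)}$ and $q_{X_n}\le c' n^{-1/d}$. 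Two small details deserve a word: the volume estimate $\mathrm{vol}\bigl(B(x_i,q_{X_n})\cap\Omega\bigr)\ge c\,q_{X_n}^d$ from the interior cone condition is immediate only for $q_{X_n}$ below the cone height $r_0$, and for larger radii the constant must be adjusted (possible since $q_{X_n}\le\mathrm{diam}(\Omega)$); and the case $n=1$ together with your final index shift is absorbed into the constant, since $\|P_{V(X_0)}\|_{L_\infty(\Omega)}\le\max_{x\in\Omega}\sqrt{K(x,x)}$ and $(n+1)^{-1/d}\le n^{-1/d}$. By contrast, the paper's actual contribution, Theorem \ref{th:final_bound}, proceeds by a genuinely different mechanism: it bounds the Kolmogorov widths of $\mathcal V(\tilde\Omega)=\{K(\cdot,x)\}$ through the fill-distance estimates of Theorem \ref{th:power_fill_distance} and then invokes the abstract greedy-approximation rates of \cite{Binev2011,DeVore2013}. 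That route is what exploits smoothness beyond $\mathcal C^2$ and improves the exponent from $-1/d$ to $-\beta/d+1/2$ (or to exponential decay for infinitely smooth kernels), something your geometric argument, which only sees second derivatives of $K$, cannot deliver; its advantage is that it is elementary and makes the constants and the role of the cone condition completely explicit.
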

The proof of this theorem requires that $K\in\mathcal C^2$ on a suitable set, and our bound indeed is similar to the present 
one under the same 
assumptions, while it will improve it when the additional smoothness of the kernel is taken into account. This 
refined error bound allows also to prove that the selected points, for certain kernels, are 
asymptotically uniformly distributed.

The paper is organized as follows. In Section \ref{sec:kernel-basics} we review the known estimates 
on the decay of the Power Function and give further details on the $P$-greedy 
algorithm. Section \ref{sec:power_and_width} is devoted to provide a connection between Kolmogorov widths and maximization of the Power Function. This 
connection allows to employ the theory of \cite{Binev2011, DeVore2013} in Section \ref{sec:convergence} to prove the main results of this paper. Finally, in 
Section \ref{sec:numerics} we present some numerical experiments which verify the expected rates of convergence. 

\begin{rem}
We remark that, although our analysis is presented for the reconstruction of scalar-valued 
functions, it applies also to the vector-valued case when using product spaces: namely, as pointed out in \cite{Wirtz2013}, for $q\geq 1$ it is possible to 
use kernel methods to reconstruct functions $f:\Omega\to 
\R^q$  simply by  considering $q$ copies of $\ns$, i.e., the product space
$$
\mathcal H_K(\Omega)^q:=\{f:\Omega\to \R^q, f_j\in\ns\}
$$
equipped with the inner product 
$$
(f, g)_q := \sum_{j=1}^q (f_j, g_j),
$$
where, to avoid having $q$ different expansions, one for each component, one can do the further 
assumption that a unique subspace $V(X_n)$ is used for every 
component. In this context, the present discussion on the $P$-greedy algorithm is directly 
applicable without modifications.
\end{rem}

\section{Power Function and the P-greedy algorithm}\label{sec:kernel-basics}
To assess the convergence rate of the $P$-greedy algorithm, we compare it with the known estimates on the decay of the Power Function. The following 
bounds apply to the notable case of translational invariant kernels, for which the behavior of the Power Function is well understood.

To be more precise, we assume from now on that there exists a function $\Phi:\R^d\to \R$ 
such that $K(x, y) := \Phi(x-y)$, and that $\Phi$ has a continuous Fourier transform $\hat\Phi$ on $\R^d$. We further assume that 
$\Omega$ satisfies an interior cone condition. 
Under these assumptions, the decay of $\|P_{V(X_n)}\|_{L_{\infty}(\Omega)}$ can be related to the smoothness of $\Phi$ (hence of $K$) and to the \textit{fill 
distance}
$$
h_{X_n, \Omega}:= \sup_{x\in\Omega}\min_{x_j \in X_n} \|x-x_j\|_2,
$$
where $\|\cdot\|_2$ is the Euclidean norm on $\R^d$. The next theorem summarizes such estimates (see \cite{Schaback1995}). We remark that the two cases (a) and 
(b) are substantially different. The first one regards kernels for which there exist $c_{\Phi}, C_{\Phi}>0$ and $\beta\in\N$, $\beta>d/2$, such that
$$
c_{\Phi}\left(1 + \|\omega\|_2^2\right)^{-\beta} \leq \hat\Phi(\omega) \leq C_{\Phi}\left(1 + \|\omega\|_2^2\right)^{-\beta},
$$ 
shortly $\hat \Phi(\omega) \sim (1 + \|\omega\|_2^2)^{-\beta}$, in which case $K\in\mathcal C^{\beta}$ and $\mathcal H_K(\R^d)$ is norm equivalent to the 
Sobolev space 
$W_2^{\beta}(\R^d)$. The second one applies to kernels of infinite smoothness, such as the Gaussian kernel. We will use the notion of kernels of \textit{finite} 
or \textit{infinite} smoothness to indicate precisely these two cases.

\begin{theorem}\label{th:power_fill_distance}
Under the assumptions on $K$ and $\Omega$ as above, we have the following cases, for suitable constants $\hat c_1, \hat c_2, \hat c_3$ not depending on $X_n$.
\begin{enumerate}[(a)]
 \item If $K$ has finite smoothness $\beta\in\N$,  
 $$
 \|P_{V(X_n)}\|_{L_{\infty}(\Omega)} \leq \hat c_1 h_{X_n, \Omega}^{\beta - d/2}.
 $$
 \item If $K$ is infinitely smooth,  
 $$
 \|P_{V(X_n)}\|_{L_{\infty}(\Omega)} \leq \hat c_2 \exp(-\hat c_3 / h_{X_n, \Omega}).
 $$
\end{enumerate}
\end{theorem}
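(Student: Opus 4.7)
\medskip
\noindent\textbf{Proof plan.}
The natural strategy is to combine the variational characterization of the Power Function with the Fourier representation of the $\ns$-inner product. Since $P_{V(X_n)}(x)$ equals the $\ns$-distance from $K(\cdot,x)$ to $V(X_n)$, for every choice of coefficients $u\in\R^n$ one has the a priori bound
\begin{equation*}
P_{V(X_n)}(x)^2 \;\leq\; \Bigl\|K(\cdot,x) - \sum_{j=1}^n u_j K(\cdot,x_j)\Bigr\|^2 .
\end{equation*}
Translation invariance $K(x,y)=\Phi(x-y)$ together with Parseval's identity rewrites the right-hand side as a weighted $L_2(\R^d)$ integral, with weight $\hat\Phi(\omega)$, of the symbol $g(\omega):=e^{-i x\cdot\omega} - \sum_j u_j e^{-i x_j\cdot\omega}$. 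The freedom to choose $u=u(x)$ is what will ultimately produce the decay in $h_{X_n,\Omega}$.

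The key ingredient is a local polynomial reproduction: under the interior cone condition, for any fixed $m\in\N$ and every $x\in\Omega$ one can construct coefficients $u_j(x)$ supported on the indices $j$ with $\|x_j-x\|_2 \leq C_1 h_{X_n,\Omega}$, such that $\sum_j u_j(x) p(x_j) = p(x)$ for every polynomial $p$ of degree at most $m$, and such that $\sum_j |u_j(x)|\leq C_2$, where $C_1,C_2$ depend only on $m$, $d$ and the cone parameters. This is a standard scattered-data argument, selecting a quasi-uniform unisolvent subset of $X_n$ inside the cone at $x$ and bounding the weights by the associated Lebesgue constant.

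With this tool at hand, the two cases are treated by different choices of $m$. In case (a) take $m:=\beta-1$. The reproduction property forces the Taylor expansion of $g$ at $\omega=0$ to vanish to order $\beta$, yielding $|g(\omega)| \leq C (h_{X_n,\Omega}\|\omega\|_2)^\beta$ on the ball $\|\omega\|_2\leq 1/h_{X_n,\Omega}$, while $|g(\omega)|\leq 1+C_2$ outside. Splitting the integral at $\|\omega\|_2\simeq 1/h_{X_n,\Omega}$ and inserting $\hat\Phi(\omega)\sim (1+\|\omega\|_2^2)^{-\beta}$ in both pieces produces a factor $h_{X_n,\Omega}^{2\beta-d}$, and taking square roots gives (a). In case (b) one lets $m$ grow with $h$ and optimises the trade-off between the Taylor remainder $(h_{X_n,\Omega}\|\omega\|_2)^m$ and the superpolynomial tail of $\hat\Phi$; the balance, struck at $m\simeq 1/h_{X_n,\Omega}$, delivers the exponential rate.

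The main obstacle is the local polynomial reproduction step: producing coefficients that are simultaneously localised on a ball of radius $C_1 h_{X_n,\Omega}$ and uniformly bounded in $\ell_1$ is where the interior cone condition genuinely enters, ensuring that each such cone contains a unisolvent point set for polynomials of degree $m$ with a controlled Lebesgue constant. Once this geometric ingredient is secured, the Fourier-analytic bookkeeping that leads to both (a) and (b) is essentially routine split-integral estimation.
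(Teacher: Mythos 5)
The paper gives no proof of Theorem \ref{th:power_fill_distance}: it is stated as a summary of known estimates and attributed to the literature (the citation to Schaback's error-estimate paper; see also Wendland's monograph), so there is no internal argument to compare against. Your sketch reconstructs precisely the classical proof behind that citation, and it is essentially sound: bounding $P_{V(X_n)}(x)$ by the native-space distance from $K(\cdot,x)$ to an arbitrary element $\sum_j u_j K(\cdot,x_j)$ of $V(X_n)$, rewriting the squared norm via Parseval as a $\hat\Phi$-weighted integral of the symbol $g(\omega)$, and choosing $u(x)$ by local polynomial reproduction under the interior cone condition (support in a ball of radius $C_1 h_{X_n,\Omega}$, $\ell_1$-norm bounded by $C_2$) is exactly the standard route; with $m=\beta-1$ the split of the integral at $\|\omega\|_2\sim 1/h_{X_n,\Omega}$ gives the factor $h_{X_n,\Omega}^{2\beta-d}$ for the squared Power Function, hence case (a).

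Two caveats on case (b). First, ``superpolynomial tail of $\hat\Phi$'' is not sufficient for the stated rate $\exp(-\hat c_3/h_{X_n,\Omega})$: the cited results require exponential-type decay of $\hat\Phi$ (Gaussian, inverse multiquadric), which is what ``infinitely smooth'' is shorthand for in this paper, so your hypothesis should be phrased accordingly. Second, once $m$ is allowed to grow with $1/h_{X_n,\Omega}$, the constants of the local polynomial reproduction are no longer harmless: in the standard construction both the support radius $C_1$ and the admissible range of $h$ depend on $m$ (the admissible $h$ shrinks like a negative power of $m$), and tracking this dependence through the factorials in the Taylor remainder is exactly the delicate Madych--Nelson-type bookkeeping that your phrase ``optimise the trade-off'' compresses. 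As a sketch the approach is the right one, and part (a) is complete in all essentials.
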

In particular, one can look at \textit{asymptotically uniformly distributed points} in $\Omega$, i.e., sequences $\{X_n\}_n$  of points such that $h_{X_n, 
\Omega} \leq 
c n ^{-1/d}$, for a 
constant $c\in \R$ not depending on $n$. The above estimates can then be written only in terms of $n$.
\begin{cor}\label{lemma:pf_bounds}
In the same setting as in Theorem \ref{th:power_fill_distance}, there exists sequences $\{X_n\}_n$ of points in $\Omega$ and constants $c_1, c_2, c_3$, whose 
Power Function behaves as follows for $n\in \N$.
\begin{enumerate}[(a)]
 \item If $K$ has finite smoothness $\beta\in\N$,
 $$
 \|P_{V(X_n)}\|_{L_{\infty}(\Omega)} \leq c_1 n^{-\frac{\beta}{d} + \frac12}.
 $$
 \item If $K$ is infinitely smooth,  
 $$
 \|P_{V(X_n)}\|_{L_{\infty}(\Omega)} \leq c_2 \exp(-c_3 n ^{1/d}).
 $$
\end{enumerate}
\end{cor}
To refer to a convergence of the Power Function as $n$ increases, and in particular to one of the above rates, we will write 
$$
 \|P_{V(X_n)}\|_{L_{\infty}(\Omega)} \leq \gamma_n\;\; \mbox{ with } \lim_{n\to\infty}\gamma_n = 0.
$$

\subsection{The $P$-greedy algorithm}
We describe here in some more detail the structure of the algorithm, and provide some details on 
its implementation.
The algorithm starts with an empty set $X_0 := \emptyset$ and with the zero subspace $V(X_0) 
:=\{0\}$, and it constructs a sequence of nested point sets 
$$
X_0\subset X_1\subset \dots\subset X_n\subset\dots\subset \Omega,
$$
by sequentially adding a new point, i.e., $X_n := X_{n-1}\cup \{x_n\}$. A sequence of nested linear 
subspaces 
$$
V(X_0)\subset V(X_1)\subset \dots\subset V(X_n)\subset\dots\subset \ns,
$$
is associated to the point sets, and for each of them a Power Function $P_{V(X_n)}$ can be defined. For $n = 0$, definition \eqref{pf_error}
gives 
$P_{V(X_0)}:=\sqrt{K(x,x)}$, since
$$
|f(x) - \Pi_{V(X_0)}(f)(x)| = |f(x)| = |(f, K(\cdot, x))| \leq \|K(\cdot,x)\| \|f\| = \sqrt{K(x,x)} \|f\|,
$$ 
and equality is obtained for $f:=K(\cdot, x)$.

The points are chosen by picking the current maximum on $\Omega\setminus X_n$ of the the $n$-th Power Function, i.e., 
\begin{align*}
x_1 &:= \argmax_{x\in\Omega} P_{V(X_0)}(x) = \sqrt{K(x,x)},\\
x_n &:= \argmax_{x\in\Omega\setminus X_{n-1}} P_{V(X_{n-1})}(x).
\end{align*}
In particular, the choice of the first point is arbitrary for a translational invariant kernel, and in 
general all the points are not uniquely defined, being the 
maxima of the Power Function not necessarily unique. 

This $P$-greedy algorithm has an efficient implementation in terms of the \textit{Newton basis} (see
\cite{Muller2009}), which allows to easily deal with nested subspaces and the corresponding 
orthogonal projections. Namely, assuming to have a sequence $\{X_n\}_n$ of nested point sets, the construction of the Newton basis is a Gram-Schmidt 
procedure over the set of the kernel translates at these points, and the resulting set of functions $\{v_k\}_{k=1}^n$ is indeed an orthonormal basis of 
$V(X_n)$, with the further property that $\Sp{v_k, 1\leq k\leq n} = V(X_n)$. In particular, the basis does not need to be recomputed when a new point is added.
We remark that this construction can be efficiently implemented by a matrix-free (i.e., only one column at a time is 
computed) partial LU-decomposition of the kernel matrix, with the pivoting rule given by the present selection criteria (see \cite{Pazouki2011}).

As mentioned in Section \ref{sec:introduction}, the $P$-greedy selection strategy guarantees that the Power Function decreases. To 
prove this fact, we first recall the following characterization of the Power Function, which we prove for completeness.
\begin{lemma}\label{lemma:pf_and_riesz}
For any subspace $V(X_n)\subset \ns$ and $x\in\Omega$, the Power Function has the representation
\begin{equation}
P_{V(X_n)}(x) = \|K(\cdot, x) - \Pi_{V(X_n)}(K(\cdot, x))\|.
\end{equation}
\end{lemma}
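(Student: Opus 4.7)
The plan is to expand the supremum in the definition \eqref{eq:power_definition} using the reproducing property, and then recognize the resulting quantity as the norm of an orthogonal projection residual by a Cauchy--Schwarz argument together with an explicit extremizer.

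First, I would use the reproducing property to write, for arbitrary $f \in \ns$ and fixed $x\in\Omega$,
\begin{equation*}
f(x) - \Pi_{V(X_n)}(f)(x) = (f - \Pi_{V(X_n)}(f), K(\cdot, x)).
\end{equation*}
Since $f - \Pi_{V(X_n)}(f)$ lies in $V(X_n)^\perp$, its inner product with the component of $K(\cdot, x)$ in $V(X_n)$ vanishes, so I can replace $K(\cdot,x)$ by its residual $r_x := K(\cdot, x) - \Pi_{V(X_n)}(K(\cdot, x))$. This gives the identity
\begin{equation*}
f(x) - \Pi_{V(X_n)}(f)(x) = (f - \Pi_{V(X_n)}(f),\, r_x).
\end{equation*}

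Next, for the upper bound, I would apply Cauchy--Schwarz and use that $\|f - \Pi_{V(X_n)}(f)\| \leq \|f\|$ (since $\Pi_{V(X_n)}$ is an orthogonal projection), obtaining
\begin{equation*}
|f(x) - \Pi_{V(X_n)}(f)(x)| \leq \|f\|\, \|r_x\|,
\end{equation*}
so that $P_{V(X_n)}(x) \leq \|r_x\|$.

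For the matching lower bound, I would exhibit the extremizer $f := r_x$. If $x\in X_n$, then $r_x = 0$ and both sides of the claimed identity vanish trivially (and $P_{V(X_n)}(x)=0$ follows directly from interpolation). Otherwise $r_x \neq 0$, and since $r_x\in V(X_n)^\perp$ we have $\Pi_{V(X_n)}(r_x)=0$, so
\begin{equation*}
\frac{|r_x(x) - \Pi_{V(X_n)}(r_x)(x)|}{\|r_x\|} = \frac{|(r_x, K(\cdot, x))|}{\|r_x\|} = \frac{(r_x, r_x)}{\|r_x\|} = \|r_x\|,
\end{equation*}
where in the second equality I again dropped the projection component of $K(\cdot,x)$ using orthogonality. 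Combining the two bounds yields the claim. I do not expect any genuine obstacle here; the only subtlety is remembering to split off the $V(X_n)$-component of $K(\cdot,x)$ before invoking Cauchy--Schwarz, which is what makes the extremizer attain equality.
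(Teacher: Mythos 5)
Your proof is correct and follows essentially the same route as the paper: both express the pointwise error as the inner product of $f$ with the residual $K(\cdot,x)-\Pi_{V(X_n)}(K(\cdot,x))$, bound it by Cauchy--Schwarz, and attain equality with that residual (suitably normalized) as extremizer. Your explicit treatment of the degenerate case $r_x=0$ is a minor extra care the paper omits, but it does not change the argument.
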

\begin{proof}
Let $f\in\ns$, $\|f\|\leq 1$ and consider an orthonormal basis $\{v_k\}_k$ of $V(X_n)$. We define here $v_x:=K(\cdot,x)$ for simplicity of notation. The 
interpolation error for $f$, measured at $x\in\Omega$, is 
\begin{align*}
&f(x) - \Pi_{V(X_n)}(f) (x) = (v_{x}, f) - \left(v_{x}, \sum_{k=1}^n (f, v_k) v_k \right) \\
&= (v_{x}, f) 
- \sum_{k=1}^n (f, v_k) 
\left(v_{x}, v_k \right) 
 = (v_{x}, f) - \left(\sum_{k=1}^n (v_{x}, v_k)  v_k, f \right) \\
&\leq \left\|v_{x} - \sum_{k=1}^n 
(v_{x}, v_k)  v_k \right\| 
\|f\|
 = \|v_{x} - \Pi_{V(X_n)}(v_{x})\| \|f\|,
\end{align*}
thus $P_{V(X_n)}(x) \leq \|v_{x} - \Pi_{V(X_n)}(v_{x})\|$, and the equality is actually reached by taking 
$$
f_x:= \frac{v_{x} - \Pi_{V(X_n)}(v_{x})}{\|v_{x} - \Pi_{V(X_n)}(v_x)\|}.
$$
\end{proof}

It is then clear that, for any orthonormal basis $\{v_k\}_{k=1}^n$ of $V(X_n)$, we have 
\begin{equation}\label{eq:power_computation}
P_{V(X_n)}(x)^2 = \|K(\cdot, x) - \Pi_{V(X_n)}(K(\cdot, x))\|^2 = K(x, x) - \sum_{k=1}^n v_k(x) ^2,
\end{equation}
and in particular, by using the Newton basis as an orthonormal basis,
\begin{equation}\label{eq:power_update}
P_{V(X_n)}(x)^2 = P_{V(X_{n-1})}(x)^2 - v_n(x) ^2.
\end{equation}

This means that the Power Function is decreasing whatever the choice of $x_n\in\Omega\setminus X_n$ is, i.e., we have 
$$
\|P_{V(X_{n})}\|_{L_{\infty}(\Omega)}\leq \|P_{V(X_{n-1})}\|_{L_{\infty}(\Omega)}.
$$


\section{Power Function and Kolmogorov width}\label{sec:power_and_width}
We can now provide a connection between the Power Function and the \textit{Kolmogorov $n$-width} 
of a particular compact subset of $\ns$. Recall 
that for a subset $\mathcal V\subset \ns$ the Kolmogorov $n$-width of $\mathcal V$ in the Hilbert 
space $\ns$ is defined as (see e.g. \cite{Pinkus1985})
\begin{equation*}
d_n(\mathcal V, \ns) : = \inf_{\stackrel{V_n\subset\ns}{\dim(V_n) = n}} \sup_{f\in \mathcal V} \|f - 
\Pi_{V_n}(f)\| = 
\inf_{\stackrel{V_n\subset\ns}{\dim(V_n) = n}} 
E(\mathcal V, V_n),
\end{equation*}
where the term $ E(\mathcal V, V_n)$ represents the worst-case error in approximating elements of 
$\mathcal V$ by means of elements of the liner subspace $V_n$. One has 
$d_n(\mathcal V, \ns)\leq \sup_{f\in \mathcal V} \|f\|$, and in particular we allow $d_n(\mathcal V, 
\ns) = +\infty$ for unbounded sets.

In order to analyze the connection between $P_{V_n}$ and $d_n$, we recall that a generalized interpolation operator can be 
defined for any 
$n$-dimensional linear subspace $V_n$ of $\ns$, not necessarily in the form 
$V(X_n)$, simply by considering the orthogonal projection operator $\Pi_{V_n}:\Omega \to V_n$ as a 
generalized interpolation operator. 
A generalized Power Function can be defined also in this case by directly 
using the definition \eqref{eq:power_definition} (see \cite{Santin2016a}), and Lemma \ref{lemma:pf_and_riesz} still holds with the same proof, i.e., 
\begin{equation}
P_{V_n}(x) = \|K(\cdot, x) - \Pi_{V_n}(K(\cdot, x))\|\;\fa x\in\Omega.
\end{equation}

With this characterization at hand, it comes easy to provide a connection with Kolmogorov widths. 
Namely, for any subset $\tilde\Omega\subseteq \Omega$, we 
can define the subset $\mathcal V(\tilde\Omega) : = \{K(\cdot, x), x\in \tilde\Omega\}\subset \ns$. 
Thanks to \eqref{eq:power_computation} and Lemma \ref{lemma:pf_and_riesz}, it is clear that
\begin{equation}\label{eq:error_and_power}
E(\mathcal V(\tilde\Omega), V_n) = \sup_{f\in \mathcal V(\tilde\Omega)} \|f - \Pi_{V_n}(f)\| = 
\sup_{x\in \tilde\Omega} P_{V_n}(x) = 
\|P_{V_n}\|_{L_{\infty}(\tilde\Omega)}.
\end{equation}

We have then the following. 
\begin{lemma}\label{lemma:p_and_k}
Let $\tilde\Omega \subseteq \Omega$.
If there exist point sets $\{X_n\}_n\subset \Omega$ , each of $n$ pairwise distinct points, and a 
sequence $\{\gamma_n\}_n\subset \R$ such that
$ \|P_{V(X_n)}\|_{L_{\infty}(\tilde\Omega)} \leq \gamma_n$, then 
\begin{equation}\label{eq:kol_decay}
d_n(\mathcal V(\tilde\Omega), \ns) \leq \gamma_n,
\end{equation}
and $\mathcal V(\tilde\Omega)$ is compact in $\ns$ if $\lim_{n\to\infty}\gamma_n = 0$. In 
particular, in the setting of Corollary \ref{lemma:pf_bounds},
\begin{enumerate}[(a)]
 \item\label{item:p_and_k_a} if $K$ has finite smoothness $\beta\in\N$,
 $$
d_n(\mathcal V(\tilde\Omega), \ns) \leq c_1 n ^{-\frac{\beta}{d} + \frac12}, 
$$
\item\label{item:p_and_k_b} if $K$ is infinitely smooth,
$$
d_n(\mathcal V(\tilde\Omega), \ns) \leq c_2 \exp(-c_3\ n ^{1/d}), 
$$
\end{enumerate}
and in both cases $\mathcal V(\tilde\Omega)$ is compact in $\ns$. 
\end{lemma}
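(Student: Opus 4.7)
The plan is to observe that the bound on the Kolmogorov width is essentially an immediate consequence of the identity \eqref{eq:error_and_power}, and then to upgrade the convergence rate into a compactness statement via a total-boundedness argument.

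First I would note that, since $K$ is strictly positive definite and $X_n$ consists of $n$ pairwise distinct points, the space $V(X_n) = \Sp{K(\cdot,x_k) : 1 \leq k \leq n}$ has dimension exactly $n$. It is therefore an admissible competitor in the infimum defining $d_n(\mathcal V(\tilde\Omega), \ns)$, and \eqref{eq:error_and_power} yields
\[
d_n(\mathcal V(\tilde\Omega), \ns) \leq E(\mathcal V(\tilde\Omega), V(X_n)) = \|P_{V(X_n)}\|_{L_\infty(\tilde\Omega)} \leq \gamma_n,
\]
which is \eqref{eq:kol_decay}. The two refined rates (a) and (b) then follow at once by inserting into this bound the asymptotically uniformly distributed point sequences provided by Corollary \ref{lemma:pf_bounds}, since $\tilde\Omega \subseteq \Omega$ makes the $L_\infty$ norm over $\tilde\Omega$ no larger than the one over $\Omega$.

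For the compactness assertion under $\gamma_n \to 0$, I would establish total boundedness of $\mathcal V(\tilde\Omega)$ in $\ns$. Given $\varepsilon > 0$, choose $n$ with $\gamma_n < \varepsilon/2$; then for every $f \in \mathcal V(\tilde\Omega)$ the projection bound
\[
\|f - \Pi_{V(X_n)}(f)\| \leq \gamma_n < \varepsilon/2
\]
holds. The projected set $\Pi_{V(X_n)}(\mathcal V(\tilde\Omega))$ lies in the finite-dimensional space $V(X_n)$ and is uniformly bounded by $\sup_{x \in \Omega}\sqrt{K(x,x)} < \infty$, a quantity finite by continuity of $K$ on the compact $\Omega$. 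Being a bounded subset of a finite-dimensional normed space, it is totally bounded, so it admits a finite $\varepsilon/2$-net; pulling this net back by the projection and enlarging radii by $\varepsilon/2$ produces a finite $\varepsilon$-net for $\mathcal V(\tilde\Omega)$. Total boundedness in the complete space $\ns$ yields compactness of the closure (and of $\mathcal V(\tilde\Omega)$ itself if one reads the statement as relative compactness, or combines it with continuity of $x \mapsto K(\cdot,x)$ and closedness of $\tilde\Omega$).

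There is no genuine analytical obstacle here: the main point of care is just the dimension count that makes $V(X_n)$ an admissible subspace in the Kolmogorov infimum, and the standard assembly of a finite $\varepsilon$-net by combining the uniform projection error with total boundedness in the finite-dimensional range $V(X_n)$.
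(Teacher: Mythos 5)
Your proof is correct, and the core of it coincides with the paper's: for the width bound both arguments simply take $V(X_n)$ (an $n$-dimensional subspace, as you rightly note via strict positive definiteness) as a competitor in the infimum defining $d_n(\mathcal V(\tilde\Omega),\ns)$, use \eqref{eq:error_and_power} to identify the worst-case error with $\|P_{V(X_n)}\|_{L_\infty(\tilde\Omega)}$, and bound the latter by $\|P_{V(X_n)}\|_{L_\infty(\Omega)}$ to plug in Corollary \ref{lemma:pf_bounds} for the rates (a) and (b). Where you diverge is the compactness claim: the paper disposes of it by citing \cite[Prop.\ 1.2]{Pinkus1985} (a bounded set is compact iff its widths tend to zero) together with the uniform bound $\sup_{x\in\tilde\Omega}\sqrt{K(x,x)}=\sqrt{\Phi(0)}$, whereas you prove the relevant implication from scratch: $\gamma_n\to 0$ plus boundedness of the projections into the finite-dimensional $V(X_n)$ gives total boundedness of $\mathcal V(\tilde\Omega)$, hence (relative) compactness in the complete space $\ns$. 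Your version is self-contained and, if anything, more scrupulous than the paper on the distinction between compactness and relative compactness: since the lemma does not assume $\tilde\Omega$ closed, $\mathcal V(\tilde\Omega)$ need not be closed in $\ns$, and genuine compactness requires either closedness of $\tilde\Omega$ (via continuity of $x\mapsto K(\cdot,x)$ in the native norm, which follows from $\|K(\cdot,x)-K(\cdot,y)\|^2=K(x,x)-2K(x,y)+K(y,y)$) or passing to the closure — a point the paper glosses over and you explicitly flag. The trade-off is the usual one: the citation is shorter, your $\varepsilon$-net assembly makes the mechanism visible and independent of the external reference.
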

\begin{proof}
From \eqref{eq:error_and_power}, and from the definition of the Kolmogorov width, one has 
\begin{align*}
d_n(\mathcal V(\tilde\Omega), \ns) =& \inf_{\stackrel{V_n\subset\ns}{\dim(V_n) = n}} 
\|P_{V_n}\|_{L_{\infty}(\tilde\Omega)} \leq 
\inf_{\stackrel{X_n\subset\Omega}{|X_n| = n}} \|P_{V(X_n)}\|_{L_{\infty}(\tilde\Omega)}\\
\leq&\inf_{\stackrel{X_n\subset\Omega}{|X_n| = n}} \|P_{V(X_n)}\|_{L_{\infty}(\Omega)},
\end{align*}
where the first inequality follows from restricting the set over which the infimum is computed, and 
the second one by considering the $L_{\infty}$-norm over the larger set $\Omega\supseteq 
\tilde\Omega$. 

Now, for any  $X_n\subset\Omega$ with $|X_n| = n$, $\|P_{V(X_n)}\|_{L_{\infty}(\Omega)}$ is an upper 
bound on the last term of the above inequalities, being 
$X_n$
non necessarily optimal. In particular this holds for the sequence of points $\{X_n\}_n$ of the 
statement, with Power Functions bounded by a sequence 
$\{\gamma_n\}_n$, 
which proves \eqref{eq:kol_decay}. Moreover, according to \cite[Prop. 1.2]{Pinkus1985}, a set 
$\mathcal V\subset\ns$ is compact if and only if it is bounded 
and 
$d_n(\mathcal V, \ns) \to 0$ as $n\to \infty$. But this is the case for $\mathcal V:= \mathcal 
V(\tilde\Omega)$ whenever $\lim_{n\to\infty}\gamma_n = 
0$, since 
$$
\sup\ \{\|f\|, f\in \mathcal V(\tilde\Omega)\} = \sup\ \{\sqrt{K(x,x)}, x\in \tilde\Omega\} = \sqrt{\Phi(0)} \in \R.
$$
In particular, by using the rates of convergence of Corollary \ref{lemma:pf_bounds} one gets the estimates \eqref{item:p_and_k_a} and \eqref{item:p_and_k_b} 
for different kernel smoothness. 
\end{proof}

\begin{rem}\label{rem:omega_tilda}
It is clear that this result holds for $\tilde\Omega = \Omega$, and this is indeed the most interesting case. Nevertheless, in actual computations one 
has generally never access to $\Omega$, but only to a subset $\tilde\Omega$, being it an arbitrary discretization required for numerically representing 
the continuous set, or a large set of data $\tilde\Omega=X_N$ coming from an application. In this case, also the optimization required by the greedy algorithm 
is performed on 
$\tilde\Omega$, and not on $\Omega$. By explicitly considering this restricted set in the above Kolmogorov width, we will be able to give exact bound on the 
convergence of the $P$-greedy algorithm when executed over $\tilde\Omega$, as will be explained in the next Section.
\end{rem}

\section{Convergence rate of the $P$-greedy algorithm}\label{sec:convergence}
The discussion of the previous Section is what we need to provide a connection to the theory of greedy algorithms developed in the 
papers \cite{Binev2011, DeVore2013}. Indeed, the $P$-greedy algorithm can be rewritten in terms of the so-called \textit{strong}
greedy algorithm of these papers as follows. 

We consider a target compact set 
$\mathcal 
V(\tilde\Omega)\subset \ns$, and, for $n\geq 1$, we select a sequence of functions $\{f_k\}_k\subset\mathcal V(\tilde\Omega)$ such 
that $\Sp{f_k, 1\leq k\leq n}$ is an approximation of $\mathcal V$. The first element 
$f_1$ is defined as
$$
f_1 := \argmax_{f\in\mathcal V(\tilde\Omega)} \|f\| = \argmax_{x\in\tilde\Omega} \sqrt{K(x,x)}.
$$
Assuming $f_1, \dots, f_{n-1}$ has been selected and $V_{n-1} := \Sp{f_1,\dots, f_{n-1}} = 
\Sp{K(\cdot, x_k), x_k\in X_{n-1}}$, the next element is
$$ 
f_n := \argmax_{f\in\mathcal V(\tilde\Omega)} E(f, V_{n-1}) = \argmax_{x\in\tilde\Omega} P_{V_{n-1}}(x) = \argmax_{x\in\tilde\Omega\setminus 
X_{n-1}} P_{V_{n-1}}(x),
$$
where we used in the last step the fact that $P_{V(X_{n-1})} =0 $ on $X_{n-1}$. 
It is clear that the present algorithm is exactly the $P$-greedy algorithm. Observe also that the orthonormal system $\{f_k^*\}_k$ obtained in the cited 
papers by Gram-Schmidt orthogonalization of $\{f_n\}_n$ is precisely the Newton basis $\{v_n\}_n$. 

In this case, thanks to the compactness of $\mathcal V(\tilde\Omega)$, we can use the estimates of \cite[Corollary 3.3]{DeVore2013}, which are in fact bounds 
on $\max_{f\in\mathcal V(\tilde\Omega)} E(f, V_{n})$, i.e., on 
$\|P_{V(X_{n-1})}\|_{L_{\infty}(\tilde\Omega)}$, in terms of $d_n(\mathcal V(\tilde\Omega), \ns)$.
In our case they read as follows.

\begin{theorem}\label{th:final_bound}
Assume $K$, $\Omega$ satisfy the hypothesis of Corollary \ref{lemma:pf_bounds}. The $P$-greedy algorithm applied to 
$\tilde\Omega \subseteq \Omega$ gives point sets $X_n\subseteq \tilde\Omega$ with the 
following decay of the Power Function.
\begin{enumerate}[(a)]
 \item If $K$ has finite smoothness $\beta\in\N$,
$$
\|P_{V(X_n)}\|_{L_{\infty}(\tilde\Omega)} \leq \hat{c_1} n ^{-\frac{\beta}{d} +\frac12}.
$$
\item If $K$ has infinitely many smooth derivatives,
$$
\|P_{V(X_n)}\|_{L_{\infty}(\tilde\Omega)}  \leq \hat{c_2}  \exp(-\hat{c_3} n ^{1/d}).
$$
\end{enumerate}
The constants $\hat{c_1}$, $\hat{c_2}$, $\hat{c_3}$ do not depend on $n$ and can be computed as
$$
\hat{c_1}:=c_1 2^{\frac{5\beta}{d} - \frac32}, \quad \hat{c_2} : = \sqrt{2 c_2}, \quad 
\hat{c_3} := 2 ^{-1 - \frac{2}{d}} c_3.  
$$

\end{theorem}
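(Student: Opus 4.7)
The plan is to view the $P$-greedy algorithm as a concrete instance of the abstract \textit{strong greedy algorithm} acting on the compact set $\mathcal V(\tilde\Omega)\subset\ns$, invoke the general greedy-vs-width comparison of \cite[Corollary~3.3]{DeVore2013}, and then plug in the Kolmogorov width estimates already supplied by Lemma \ref{lemma:p_and_k}.

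First, I would make explicit the identification between the two algorithms. The paragraph preceding the statement already uses Lemma \ref{lemma:pf_and_riesz} and the identity \eqref{eq:error_and_power} to show that the point $x_n\in\tilde\Omega\setminus X_{n-1}$ chosen by $P$-greedy corresponds to the greedy element $K(\cdot,x_n)\in\mathcal V(\tilde\Omega)$, and that the worst-case error of the strong greedy procedure at step $n$ coincides exactly with $\|P_{V(X_n)}\|_{L_\infty(\tilde\Omega)}$. The compactness of $\mathcal V(\tilde\Omega)$ in $\ns$, which is the standing hypothesis required in \cite{DeVore2013}, is furnished by Lemma \ref{lemma:p_and_k} under the assumptions of Corollary \ref{lemma:pf_bounds}.

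Second, I would appeal to \cite[Corollary~3.3]{DeVore2013}, which converts decay of $d_n(\mathcal V,\ns)$ into decay of the strong greedy error. For algebraic decay $d_n\leq C_0\,n^{-\alpha}$ with $\alpha>0$ it yields a greedy error of the form $C_0\cdot 2^{5\alpha-1/2}\,n^{-\alpha}$, while for exponential decay $d_n\leq C_1\,e^{-C_2\,n^{1/d}}$ it yields $\sqrt{2C_1}\,e^{-2^{-1-2/d}\,C_2\,n^{1/d}}$. Plugging in the width bounds from Lemma \ref{lemma:p_and_k} produces in case (a) the choice $\alpha=\beta/d-1/2$ and $C_0=c_1$, leading to $\hat c_1 = c_1\,2^{5\beta/d-3/2}$, and in case (b) the choices $C_1=c_2$, $C_2=c_3$, leading to $\hat c_2=\sqrt{2c_2}$ and $\hat c_3=2^{-1-2/d}\,c_3$.

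The principal obstacle is essentially bookkeeping: since the key analytic ingredients (the strong-greedy reformulation and the widths estimates) are already in place, the remaining work reduces to tracking the constants in \cite[Corollary~3.3]{DeVore2013} and matching them to those in the statement. The only hypothesis that deserves care is the requirement $\alpha>0$ for the algebraic bound, i.e.\ $\beta>d/2$, which is the Sobolev embedding condition implicit in the finite smoothness setting of Corollary \ref{lemma:pf_bounds} and is in any case needed for $\mathcal H_K\hookrightarrow C(\Omega)$ so that the Power Function is well defined.
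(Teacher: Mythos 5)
Your proposal is correct and follows exactly the paper's own route: identify $P$-greedy with the strong greedy algorithm on the compact set $\mathcal V(\tilde\Omega)\subset\ns$ (compactness and the width decay coming from Lemma \ref{lemma:p_and_k}), apply \cite[Corollary 3.3]{DeVore2013}, and read off the constants. The only blemish is the generic algebraic-case constant you quote ($2^{5\alpha-1/2}$ instead of $2^{5\alpha+1}$ as in \cite[Corollary 3.3]{DeVore2013}); since your final value $\hat c_1 = c_1\,2^{5\beta/d-3/2}$ is the one produced by the correct formula with $\alpha=\beta/d-1/2$, this is a mere slip and the argument stands.
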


\begin{rem}\label{rem:sobolev_optimal}
In the case (a) of the above theorem, something more can be deduced on the quality of the approximation provided by the $P$-greedy algorithm. Indeed, in this 
case the native space on $\Omega = \R^d$ is norm-equivalent to the Sobolev space $W_2^{\beta}(\Omega)$ and in these spaces the behavior of the best 
approximation is well understood. Indeed, denoting as $B_1\subset\ns$ the unit ball in the native space and by $\Pi_{L_2, V_n}$ the $L_2(\Omega)$-orthogonal 
projection into a linear subspace $V_n\subset L_2(\Omega)$, we can consider the Kolmogorov width 
$$
d_n(B_1, L_2(\Omega)) : = \inf_{V_n\subset L_2(\Omega)}\sup_{f\in B_1} \|f - \Pi_{L_2, V_n}(f)\|_{L_2(\Omega)},
$$
which is known to behave (see \cite{Jerome1970}) as 
$$
c n^{-\beta/d}\leq d_n(B_1, L_2(\Omega)) \leq C n^{-\beta/d}, \;c, C >0.
$$
Moreover, it has been proven in \cite{Schaback2002a} that the same rate (in fact precisely the same value) can be obtained by considering subspaces $V_n\subset 
\ns$ and the $\ns$-orthogonal projection $\Pi_{V_n}$ (the same one we used so far in this paper), i.e., 
$$
\kappa_n(B_1, L_2(\Omega)) : = \inf_{V_n\subset \ns}\sup_{f\in B_1} \|f - \Pi_{V_n}(f)\|_{L_2(\Omega)} = d_n(B_1, L_2(\Omega)).
$$
Unfortunately, the above infimum is reached by considering a subspace generated by eigenfunctions of a particular integral operator, which are not known in 
general (see e.g. 
\cite{Santin2016a}). Nevertheless, again in the paper \cite{Schaback2002a} it has been observed that standard kernel-based approximation can reach almost the 
same asymptotic order of convergence in a bounded set $\Omega\subset\R^d$. Indeed, by considering an asymptotically uniformly distributed point 
sequence $\{X_n\}_n\subset \Omega$, Corollary \ref{lemma:pf_bounds} and the error bound \ref{pf_error} give 
\begin{align*}
& \sup_{f\in B_1} \|f - \Pi_{V(X_n)}(f)\|_{L_2(\Omega)} \leq  \sup_{f\in B_1} \|f\| \|P_{V(X_n)}\|_{L_2(\Omega)}\\
& \leq  \mbox{ meas}(\Omega)^{1/2} \|P_{V(X_n)}\|_{L_{\infty}(\Omega)} \leq \mbox{ meas}(\Omega)^{1/2} c_1 n^{-\frac{\beta}{d} + \frac12},
\end{align*}
where $\mbox{ meas}(\cdot)$ is the Lebesgue measure.

Thanks to Theorem \ref{th:final_bound}, this asymptotically near-optimal rate of convergence in Sobolev spaces can be reached also by greedy techniques. 
Moreover, we will see in Section \ref{sec:numerics} that the actual convergence of the $P$-greedy algorithm seems to be in fact  of rate $n^{-\beta/d}$, and 
not only $n^{-\beta/d + 1/2}$ as proven here.
\end{rem}

\subsection{Distribution of the selected points}
The previous result has also some consequence on the distribution of the points selected by the 
$P$-greedy algorithm. When the algorithm was introduced in \cite{DeMarchi2005}, the Authors
noticed that the point were placed in an asymptotically uniform way inside $\Omega$, and in they also 
proved the following result.
\begin{theorem}\label{th:small_error_small_h}
Assume $K$ and $\Omega$ satisfy the same assumptions as in Theorem \ref{th:power_fill_distance}, with $\Phi(\omega)\sim (1 + 
\|\omega\|_2^2)^{\beta}$, $\beta>d/2$. Then for any 
$\alpha> \beta$, there exist a constant $M_{\alpha}>0$ such that, if $\varepsilon_n >0$ and $X_n\subset\Omega$ satisfy
$$
\|f - \Pi_{V(X_n)}\|_{L_{\infty}(\Omega)} \leq \varepsilon_n \|f\| \;\fa f\in\ns,
$$
then
$$
h_{X_n,\Omega}\leq M_{\alpha} \varepsilon_n^{1/(\alpha - d/2)}.
$$
\end{theorem}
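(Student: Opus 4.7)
The overall plan is to establish an inverse inequality of ``bubble function'' type, which is the standard route to converting a uniform error estimate into a fill-distance estimate. Intuitively, a large fill distance means there is a sizable data-free region in $\Omega$ into which one can concentrate a function that vanishes on $X_n$; the hypothesis on $\varepsilon_n$ will then force that region to be small.

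First, I would locate the hole. By the definition of $h := h_{X_n,\Omega}$ and compactness, there exists $z\in\Omega$ with $\min_j\|z-x_j\|_2 = h$; using the interior cone condition one extracts a Euclidean ball $B\subset\Omega$ of radius $\sim h$ centered at some $z_0\in\Omega$ with $B\cap X_n=\emptyset$ (the cone attached at $z$ provides the room when $z$ is near $\partial\Omega$). Next, I would build the bump: fix $\phi\in C_c^\infty(\R^d)$ with $\phi(0)=1$ and $\mathrm{supp}(\phi)\subset B(0,1)$, and set $f_h(x) := \phi((x-z_0)/(ch))$, supported in $B$. A direct scaling computation gives
\begin{equation*}
\|f_h\|_{W_2^\alpha(\R^d)} \leq C_\alpha\, h^{d/2-\alpha} \quad \text{for every } \alpha>0,
\end{equation*}
with $C_\alpha$ independent of $h$. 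Since $\mathcal H_K(\R^d)$ is norm-equivalent to $W_2^\beta(\R^d)$ and $W_2^\alpha\hookrightarrow W_2^\beta$ continuously when $\alpha>\beta$, we get $\|f_h\|\leq C_\alpha' h^{d/2-\alpha}$ for $f_h$ regarded as an element of $\ns$ (after restriction to $\Omega$). The key observation is now that $f_h$ vanishes on $X_n$, so by the reproducing property $(f_h,K(\cdot,x_j))=f_h(x_j)=0$, meaning $f_h\perp V(X_n)$ and $\Pi_{V(X_n)}(f_h)=0$. Consequently
\begin{equation*}
1 = f_h(z_0) \leq \|f_h\|_{L_\infty(\Omega)} = \|f_h-\Pi_{V(X_n)}(f_h)\|_{L_\infty(\Omega)} \leq \varepsilon_n\,\|f_h\| \leq \varepsilon_n\, C_\alpha'\, h^{d/2-\alpha},
\end{equation*}
which rearranges to $h\leq M_\alpha\,\varepsilon_n^{1/(\alpha-d/2)}$ with $M_\alpha := (C_\alpha')^{1/(\alpha-d/2)}$, giving the claim.

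The main obstacle is the geometric part of Step~1: one must guarantee that a ball of radius proportional to $h$ actually fits inside $\Omega$ in a region disjoint from $X_n$, and near boundary points this is precisely where the interior cone condition does the work. A secondary technical nuisance, and the reason the exponent is $1/(\alpha-d/2)$ for $\alpha>\beta$ rather than the naively expected $1/(\beta-d/2)$, is that the embedding constant $C_\alpha'$ from $W_2^\alpha$ into the native-space norm (obtained via a bounded extension operator from $\Omega$ to $\R^d$) blows up as $\alpha\searrow\beta$, so one must keep $\alpha$ strictly above $\beta$ to obtain a clean power-law relationship; any sharper statement would require a more delicate bubble construction tailored to the native space itself.
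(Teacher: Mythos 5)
Your proposal is essentially correct, but note that the paper does not prove Theorem \ref{th:small_error_small_h} at all: it is quoted from \cite{DeMarchi2005}, and the argument used there is precisely your ``hole plus scaled bump'' construction (a ball of radius proportional to $h_{X_n,\Omega}$ inside $\Omega$ and disjoint from $X_n$, obtained from the interior cone condition; a compactly supported bump $f_h$ with $f_h(z_0)=1$ that vanishes on $X_n$, hence is orthogonal to $V(X_n)$ by the reproducing property, so that $1\leq \varepsilon_n\|f_h\|$; and the scaling bound $\|f_h\|_{W_2^\alpha(\R^d)}\leq C_\alpha h^{d/2-\alpha}$). So in substance you have reproduced the original proof. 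Two small technical points you gloss over are harmless but worth stating: the scaling estimate as written holds for $h$ below a fixed threshold (the cone height), and the regime of large $h\leq\mathrm{diam}(\Omega)$ is absorbed into the constant $M_\alpha$ because the same test function on a ball of fixed radius forces $\varepsilon_n$ to be bounded away from zero.

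The one genuinely wrong piece is your closing explanation of why the exponent is $1/(\alpha-d/2)$ with $\alpha>\beta$. No extension operator is needed: $f_h$ is globally defined and supported inside $\Omega$, and the native space on $\Omega$ carries the restriction (quotient) norm, so $\|f_h|_\Omega\|\leq\|f_h\|_{\mathcal H_K(\R^d)}$; moreover, under the two-sided assumption $\hat\Phi(\omega)\sim(1+\|\omega\|_2^2)^{-\beta}$ stated in the theorem, the norm equivalence with $W_2^\beta(\R^d)$ has a fixed constant, so your own argument applied directly with $\alpha=\beta$ yields the \emph{stronger} bound $h_{X_n,\Omega}\leq M\,\varepsilon_n^{1/(\beta-d/2)}$, from which the stated bound for every $\alpha>\beta$ follows a fortiori (for $\varepsilon_n\leq 1$, and trivially otherwise). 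Nothing blows up as $\alpha\searrow\beta$ in this setting. The reason the result is formulated with an auxiliary $\alpha$ in \cite{DeMarchi2005} is that there only a one-sided condition of the form $\hat\Phi(\omega)\geq c_\alpha(1+\|\omega\|_2^2)^{-\alpha}$ is needed, so the same statement covers kernels (e.g.\ infinitely smooth ones) whose native space is strictly smaller than $W_2^\beta$; in that generality one estimates $\|f_h\|^2\leq c_\alpha^{-1}\|f_h\|_{W_2^\alpha(\R^d)}^2$ and the exponent $1/(\alpha-d/2)$ appears naturally. So your proof stands, but the heuristic in your last paragraph should be replaced by this observation.
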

Unfortunately, the rate of convergence of Theorem \ref{th:pgreedy_demscwe} was not enough to 
conclude that the points are asymptotically uniformly distributed, which is instead possible with 
the bounds of Theorem \ref{th:final_bound}.
\begin{cor}\label{cor:fill_distance}
Under the same assumptions of the previous Theorem, there exists a constant $c > 0$ such 
that, for any $n\in\N$, the sets $\{X_n\}_n$ selected by the $P$-greedy algorithm satisfy 
\begin{equation*}
h_{{X_n}, {\Omega}} \leq c n^{-\frac1d (1 - \varepsilon)},
\end{equation*}
for any $\varepsilon \in(0, 1)$, where $c$ is independent of $n$.
\end{cor}
\begin{proof}
In the present assumptions we have from Theorem \ref{th:final_bound} $\varepsilon \leq \hat{c_1} n ^{-\frac{\beta}{d} +\frac12}$.
Theorem \ref{th:small_error_small_h} then implies that, for all $\alpha> \beta$, 
$$
h_{X_n,\Omega}\leq M_{\alpha} \left(\hat{c_1} n ^{-\frac{\beta}{d} +\frac12}\right) ^{\frac{1}{\alpha - d/2}},
$$
and for any $\varepsilon \in(0, 1)$ there is an $\alpha>\beta$ such that the exponent can be written as follows
$$
\left(-\frac{\beta}{d} +\frac12\right) \left(\frac{1}{\alpha - d/2}\right) = -\frac1d\left(\frac{\beta - d/2}{\alpha-d/2}\right) =  -\frac1d (1 - \varepsilon).
$$
\end{proof}

We remark that the above result does not apply in the case of infinitely smooth kernels. On one side, 
the proof of Theorem \ref{th:small_error_small_h} uses tools which are related to Sobolev spaces, 
hence to kernels of finite smoothness. On the other hand, one could not expect a decay of the fill 
distance with exponential speed with respect to the number of points. Nevertheless, it is plausible 
to expect that also for kernels of this kind an algebraic convergence of the fill distance is 
possible, even if it is not clear with what rate.

\section{Numerical experiments}\label{sec:numerics}
We test in this Section the theoretical rates obtained in Theorem \ref{th:final_bound} for kernels of different smoothness and in different space dimensions.

In order to ensure the validity of the hypothesis on $\Omega$, in all the following experiments we consider as a base domain the unit ball $\Omega:=\{x\in\R^d, 
\|x\|_2\leq 1\}$, for $d=1, 2, 3$. Furthermore, to implement numerical calculations $\Omega$ is represented by a discretization $\tilde\Omega\subset\Omega$, 
obtained by intersecting a uniform grid in $[-1, 1]^d$ with the unit ball. The grids have respectively $10^4$ ($d = 1$), $114^2$ ($d = 2$), $28^3$ ($d = 3$)
points, so that the resulting number of points of $\tilde\Omega$ is approximately $10^4$. 
The point selection, and both the computation of the supremum norm and of the fill distance are performed on this discretized set. We point out that this choice 
of $\tilde\Omega$ is somehow arbitrary, but it is justified in view of Remark \ref{rem:omega_tilda}.

As kernels we consider radial basis functions which satisfy the requirements of the convergence results, namely the Gaussian 
kernel $G$ defined by $\Phi(r) := \exp(-(\varepsilon r) ^2)$, as an infinitely smooth kernel, and the Wendland kernels $W_{\beta, d}$ for  $\beta = 2, 3$, 
as kernels of finite smoothness $\beta$ (see \cite{Wendland1995a}). We consider unscaled version 
of the kernels, i.e., in all the experiments the shape parameter $\varepsilon$ is fixed to the 
value $\varepsilon = 1$.

The $P$-greedy algorithm is applied via a matrix-free implementation of the Newton basis, based on \cite{Muller2009, Pazouki2011}. The code can be found 
on the website of G. Santin\footnote{\url{http://www.mathematik.uni-stuttgart.de/fak8/ians/lehrstuhl/agh/orga/people/santin/index.en.html}}. The algorithm is 
stopped by means of a tolerance of $\tau = 10 ^{-15}$ on the maximal value of the square of the Power Function on $\tilde \Omega$, 
or a maximum expansion size of $n = 1000$.  We remark that the present implementation actually computes the square of the Power Function via the formula 
\eqref{eq:power_update}, so numerical cancellation can happen when $\|P_{V(X_n)}\|_{L_{\infty}(\tilde\Omega)}^2$ is close to the machine precision. We remark 
that for some class of kernels it is possible to employ a more stable and accurate computation method for the Power Function (see \cite[Section 
14.1.1]{Fasshauer2015}), even if it is not clear if and how it applies to an iterative computation like the present one.

The numerical decay rate of the Power Function for the Gaussian kernel are presented in Figure \ref{fig:gauss_pf}, and the experiments confirm the expected 
decay rate of Theorem \ref{th:final_bound}. The coefficients $\hat{c}_2$, $\hat{c}_3$ are estimated numerically, and are reported in Table 
\ref{tab:gauss_coeffs}. 
\begin{table}[!hbt]
\centering
\begin{tabular}
{|c|c|c|c|}
\hline
 &$d = 1$&$d = 2$&$d = 3$\\
 \hline
 $\hat c_2$&$3.47  $&$5.10 $& $6.37$\\
 \hline
 $\hat c_3$&$1.22$&$ 1.80$&$2.31$\\
\hline
\end{tabular}
\caption{Estimated coefficients for the decay rate of the Power Function with the Gaussian kernel.}\label{tab:gauss_coeffs}
\end{table}

\begin{figure}[!hbt]
\centering
\begin{tabular}{ccc}
\includegraphics[width=0.3 \textwidth]{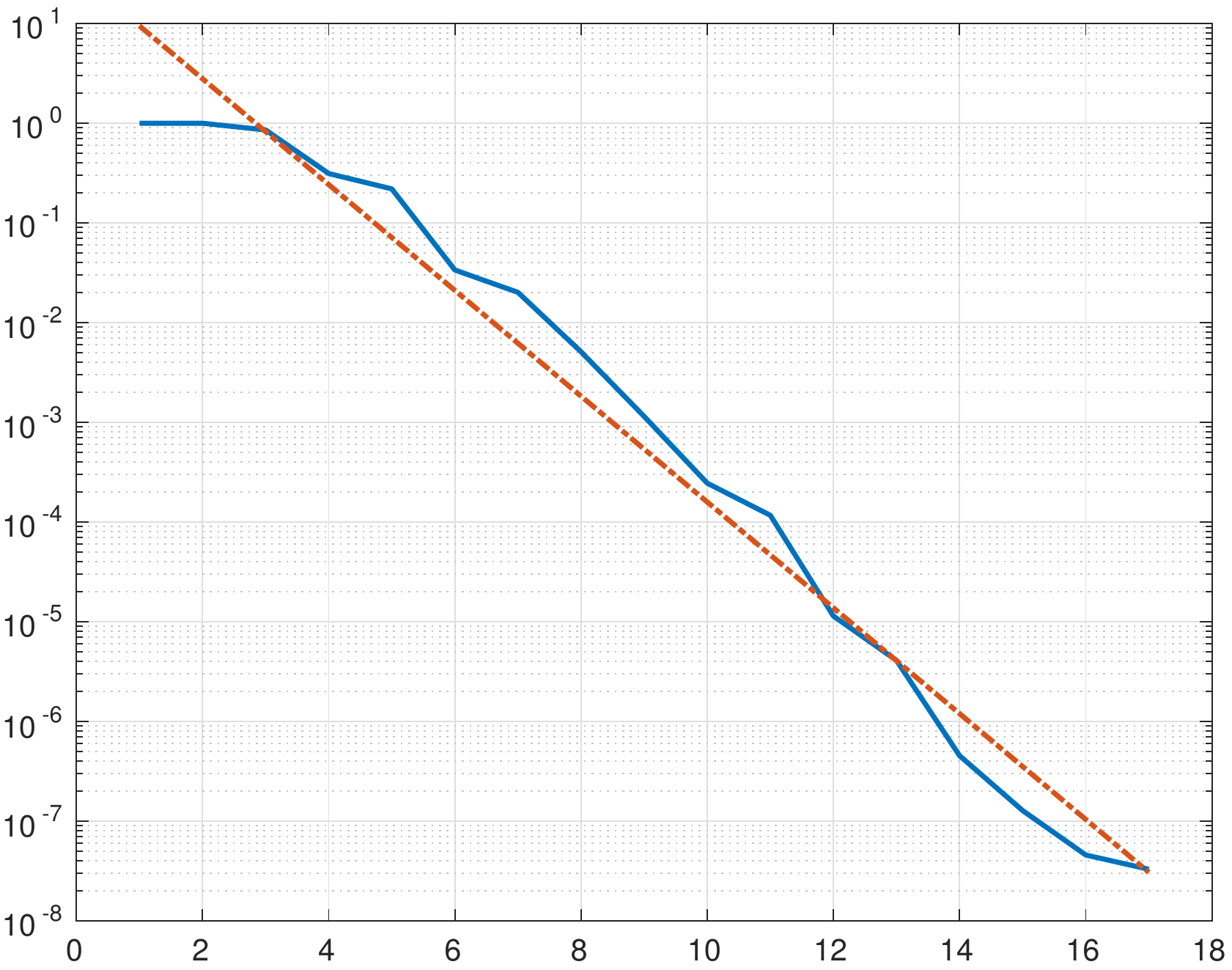}&
\includegraphics[width=0.3 \textwidth]{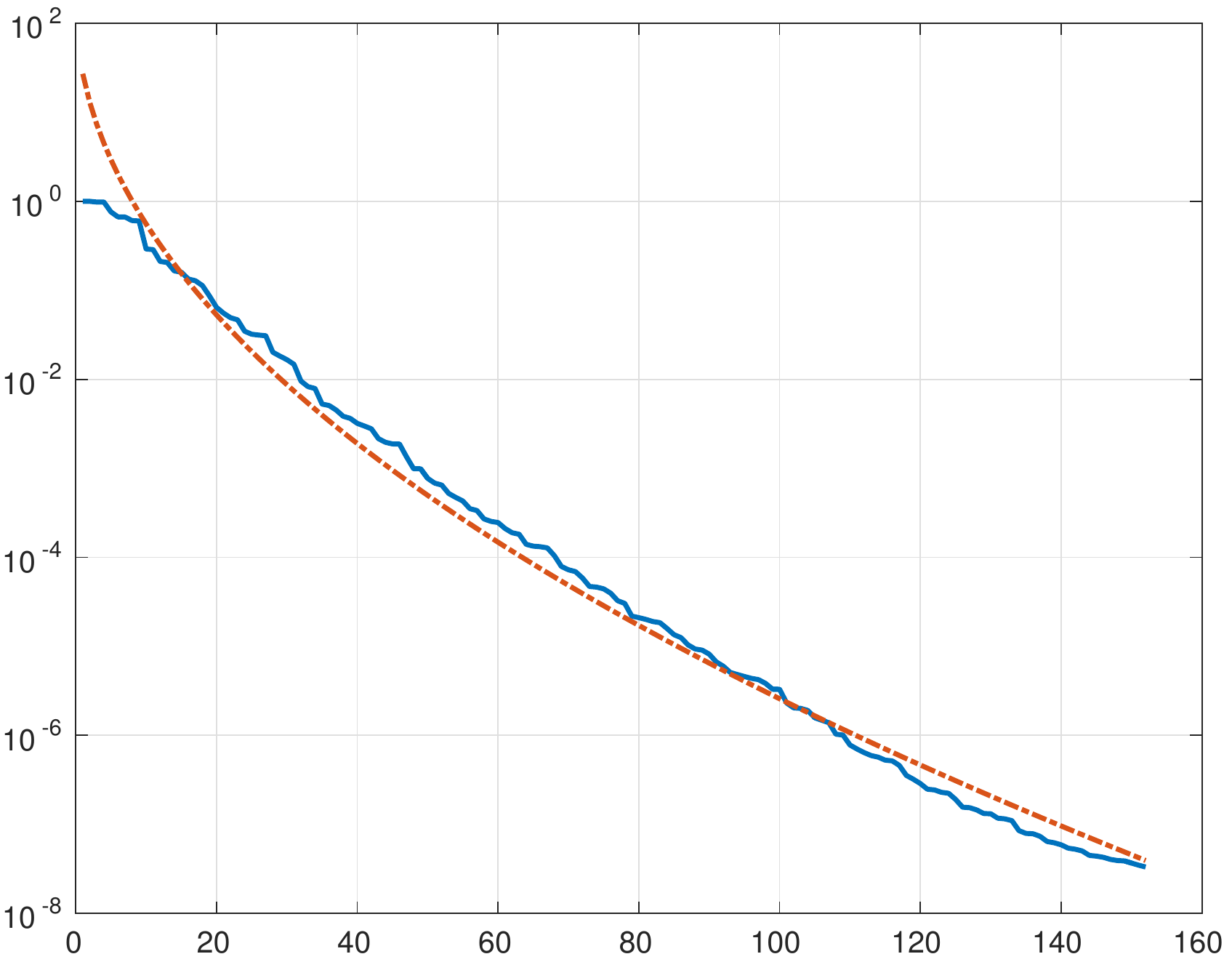}&
\includegraphics[width=0.3 \textwidth]{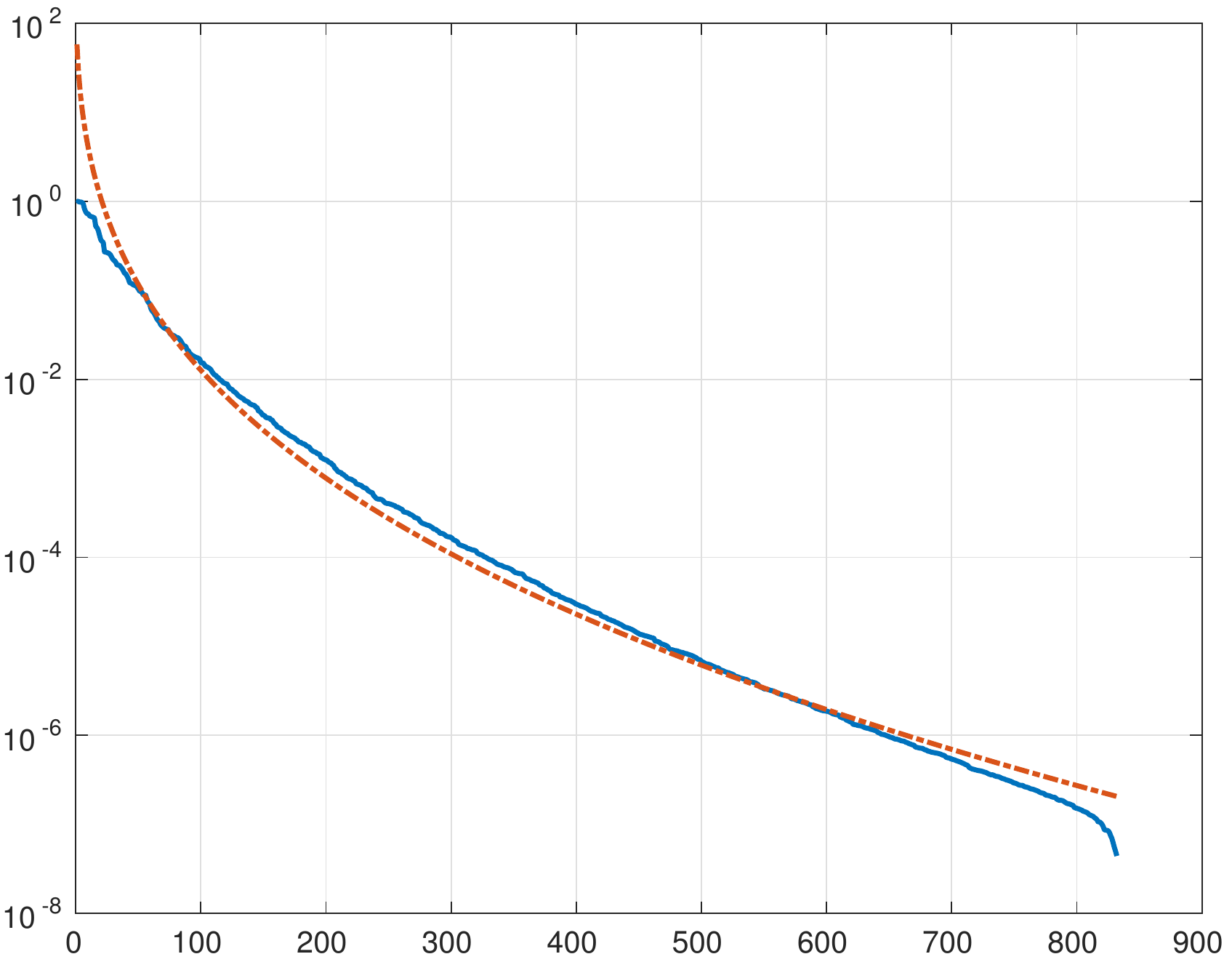}
\end{tabular}
\caption{Expected theoretical rate of convergence (dotted red lines) and computed decay of the Power Function for the $P$-greedy algorithm (solid blue lines), 
with the 
setting described in Section \ref{sec:numerics} and the Gaussian kernel. From left to right: $d =1 , 2, 3$.}\label{fig:gauss_pf}
\end{figure}

Figure \ref{fig:wendland_pf} shows the results of the same experiment for the Wendland kernels. Here we can observe that the theoretical rate of Theorem 
\ref{th:final_bound} seems to be not sharp, and instead the rate of Remark \ref{rem:sobolev_optimal} seems to be valid. We report both rates in the figure, 
computed with scaling coefficients as in Table \ref{tab:wen_coeffs}. These results could be an insight of the optimality of kernel methods in Sobolev spaces, 
where the optimal decay rate can be reached also by greedy methods.

\begin{table}[!hbt]
\centering
\begin{tabular}
{|c|c|c|c|c|c|c|c|c|}
\hline
   &$d = 1$&$d = 2$&$d = 3$&\hspace{0.2cm}&  &$d = 1$&$d = 2$&$d = 3$\\
 \hline
 $\beta = 2$&$0.003$&$0.01$& $0.02 $&&$\beta = 2$&$0.08$&$0.34$& $0.49 $\\
 \hline
 $\beta = 3$&$0.03$&$0.02$&$0.02$&& $\beta = 3$&$0.32$&$0.52$&$0.67$\\
\hline
\end{tabular}
\caption{Estimated coefficient $\hat{c}_1$ for the decay rate of the Power Function with the Wendland kernels for the theoretical rate of 
convergence (left) and the modified rate of convergence (right).}\label{tab:wen_coeffs}
\end{table}

In the same setting, also the fill distance of the selected points is computed. The results are shown in Figure \ref{fig:filldistance}, and they confirm the 
decay rate expected from Corollary \ref{cor:fill_distance}. Also in this case the theoretical rate is scaled by a positive coefficient. Observe that in this 
case the use of a discretized set $\tilde\Omega$ in place of $\Omega$ influences the results of the computations.

\begin{figure}[!hbt]
\centering
\begin{tabular}{ccc}
\includegraphics[width=0.3 \textwidth]{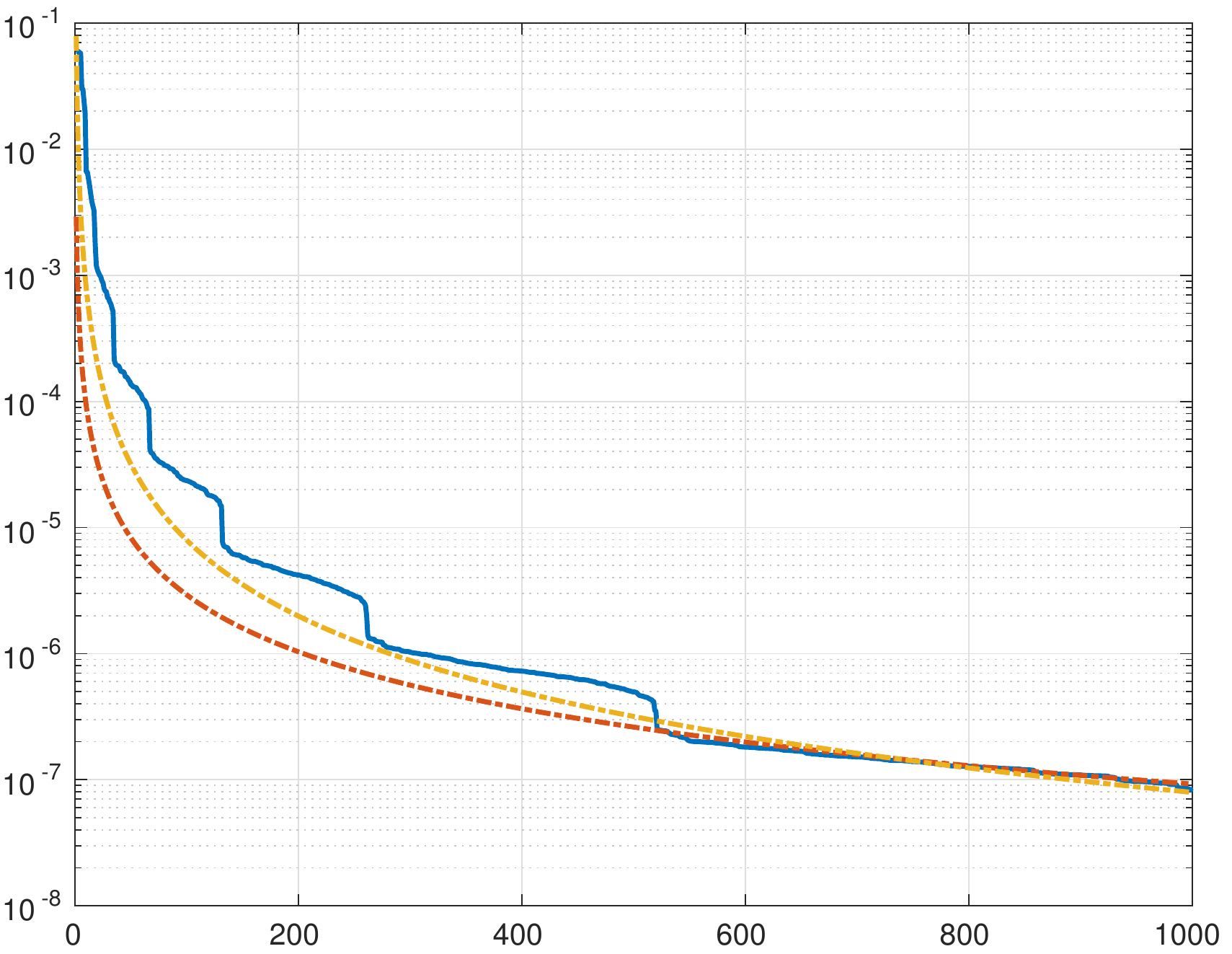}&
\includegraphics[width=0.3 \textwidth]{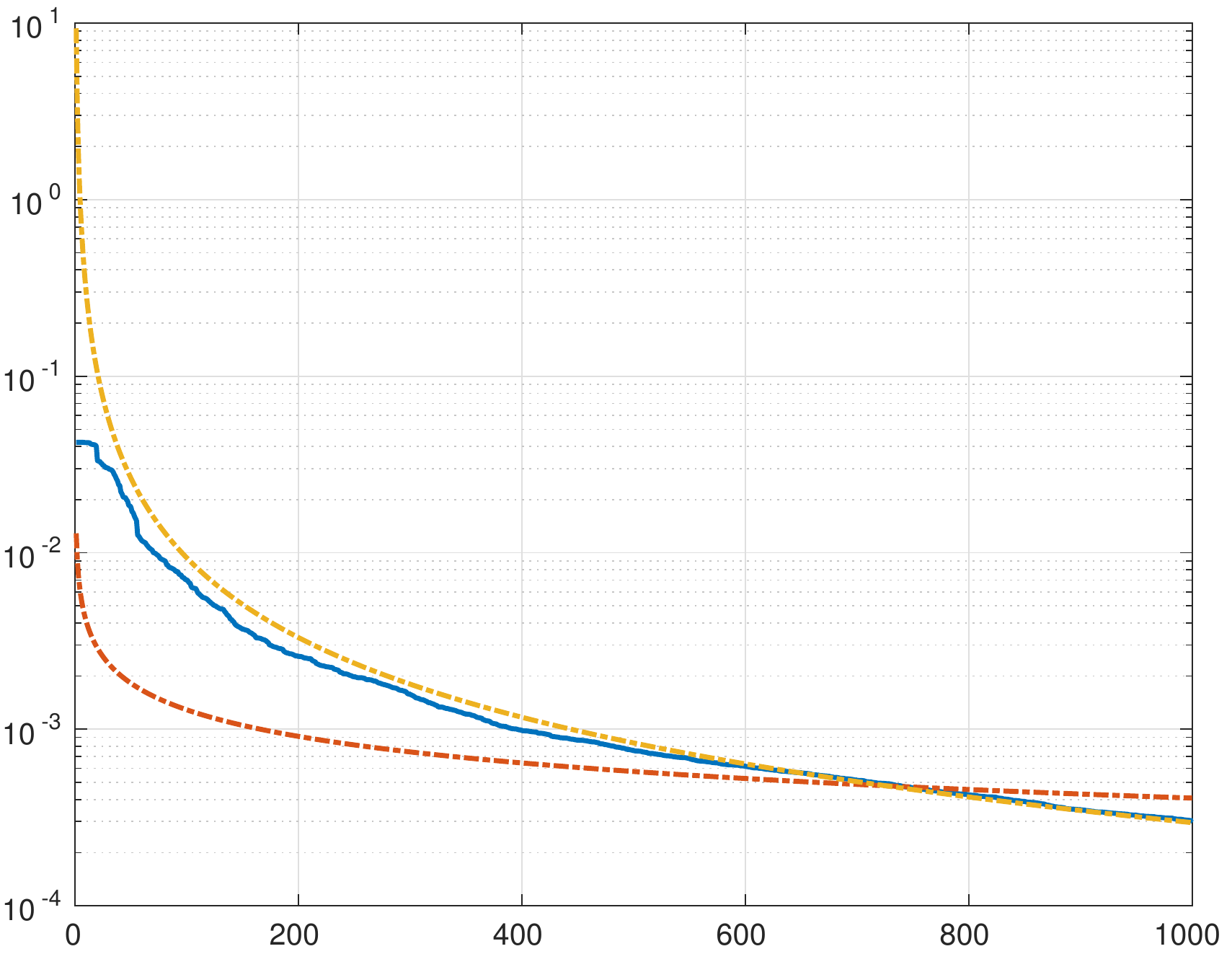}&
\includegraphics[width=0.3 \textwidth]{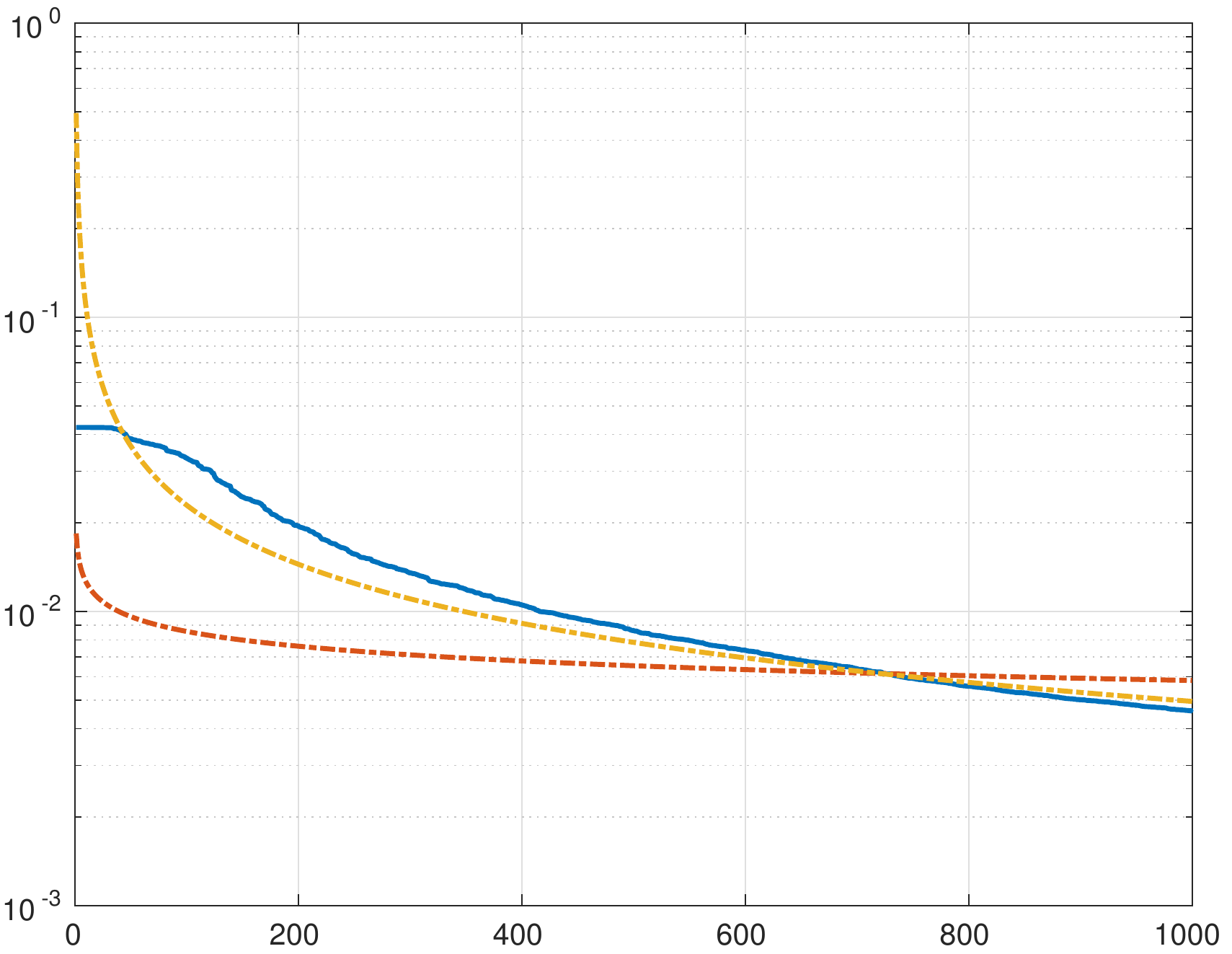}\\
\includegraphics[width=0.3 \textwidth]{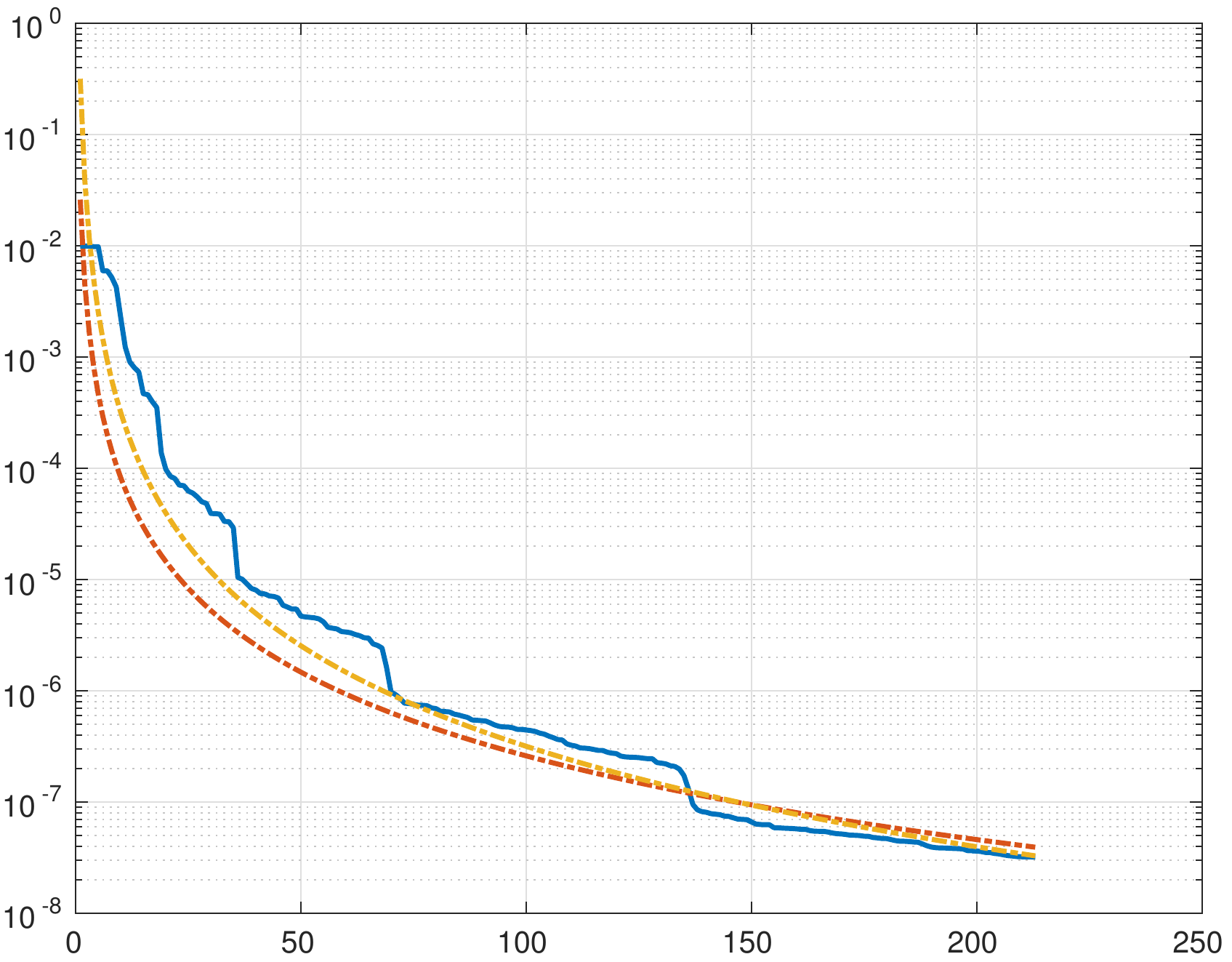}&
\includegraphics[width=0.3 \textwidth]{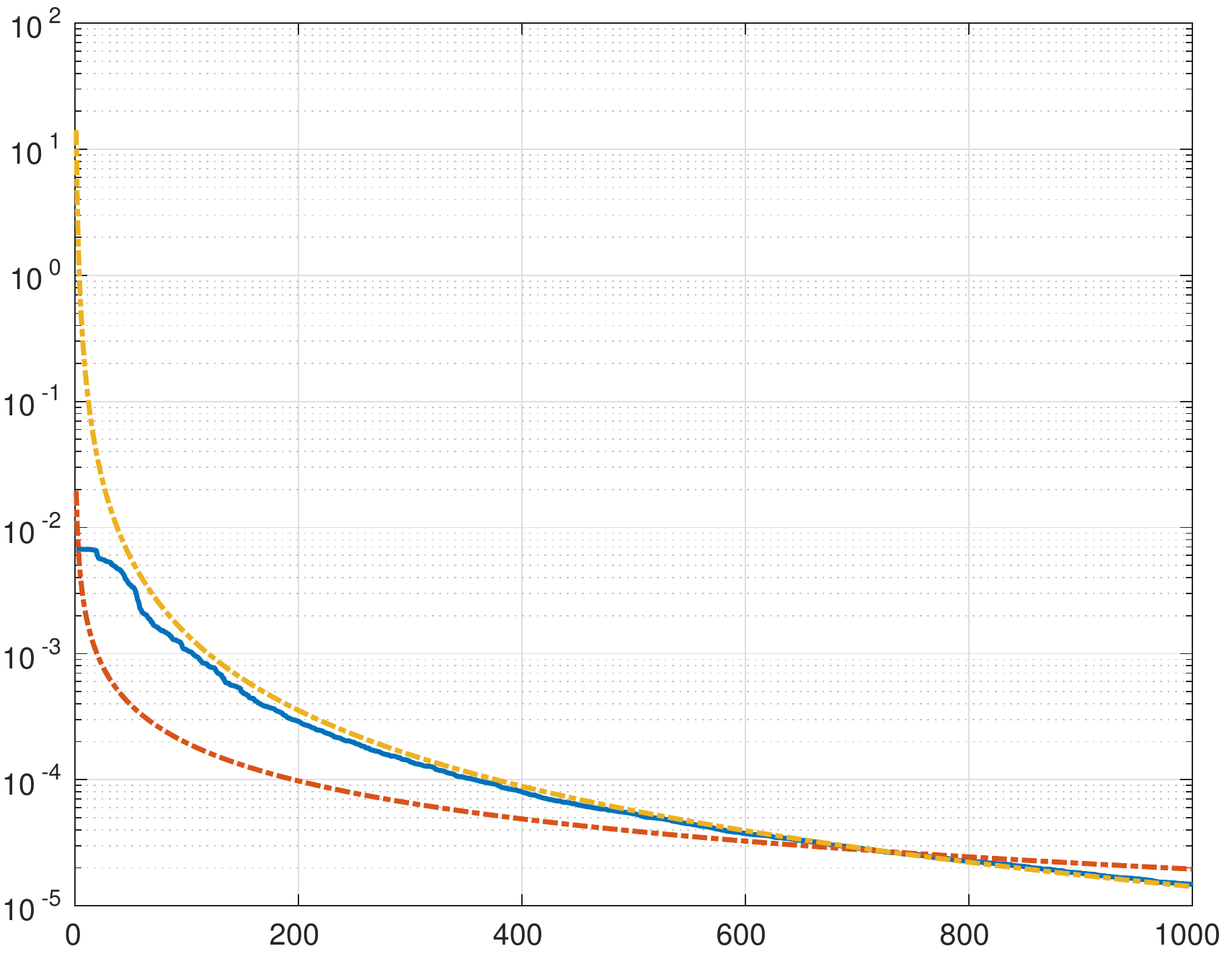}&
\includegraphics[width=0.3 \textwidth]{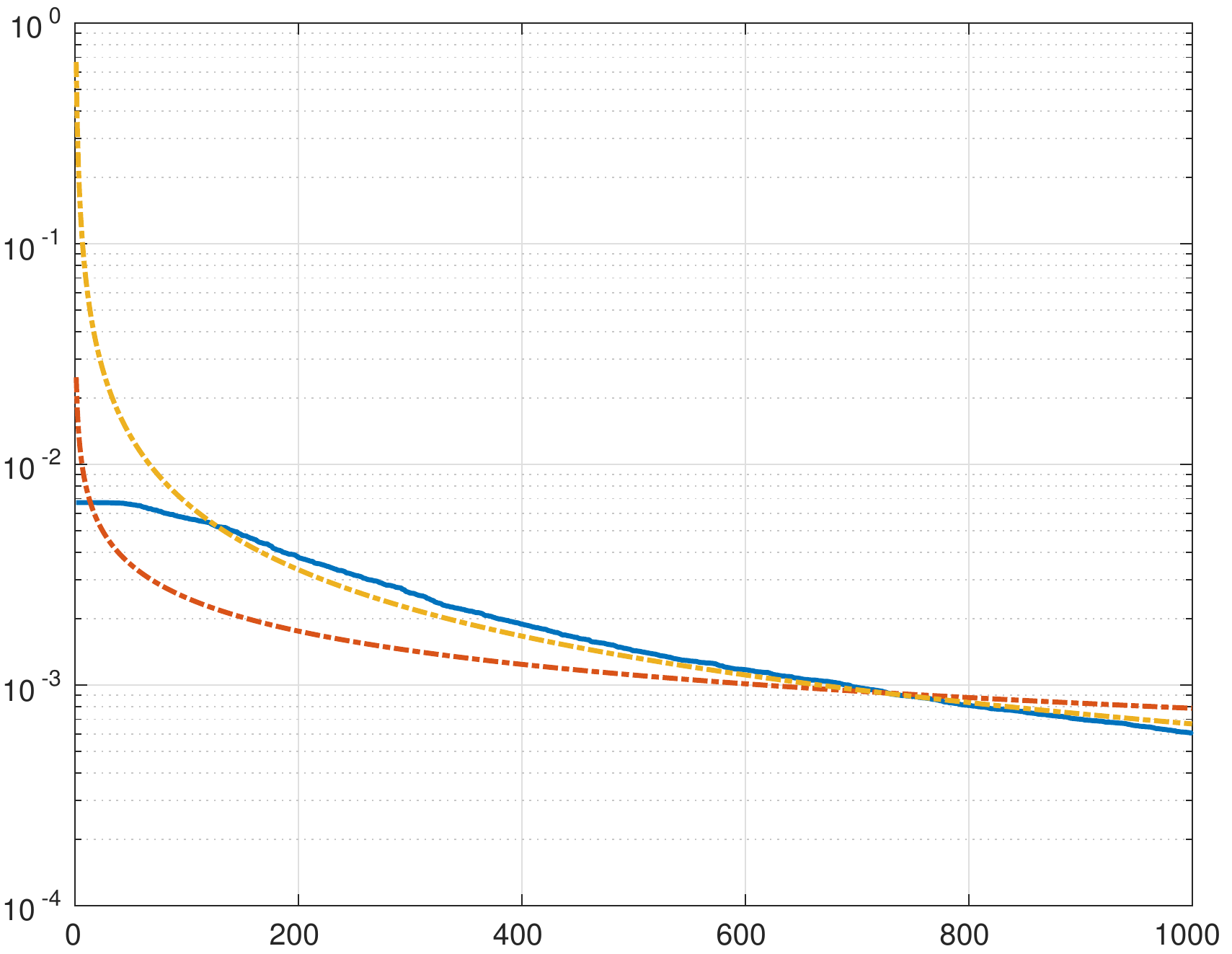}\\
\end{tabular}
\caption{Expected theoretical rate of convergence (dotted red lines), improved rate of convergence (dotted yellow lines), and computed decay of the Power 
Function for the $P$-greedy algorithm (solid blue lines), 
with the setting described in Section \ref{sec:numerics} and the Wendland kernels, with $d =1 , 2, 3$ (from left to right) and $\beta = 2, 3$ (from top to 
bottom).}\label{fig:wendland_pf}
\end{figure}

\begin{figure}[!hbt]
\centering
\begin{tabular}{ccc}
\includegraphics[width=0.3 \textwidth]{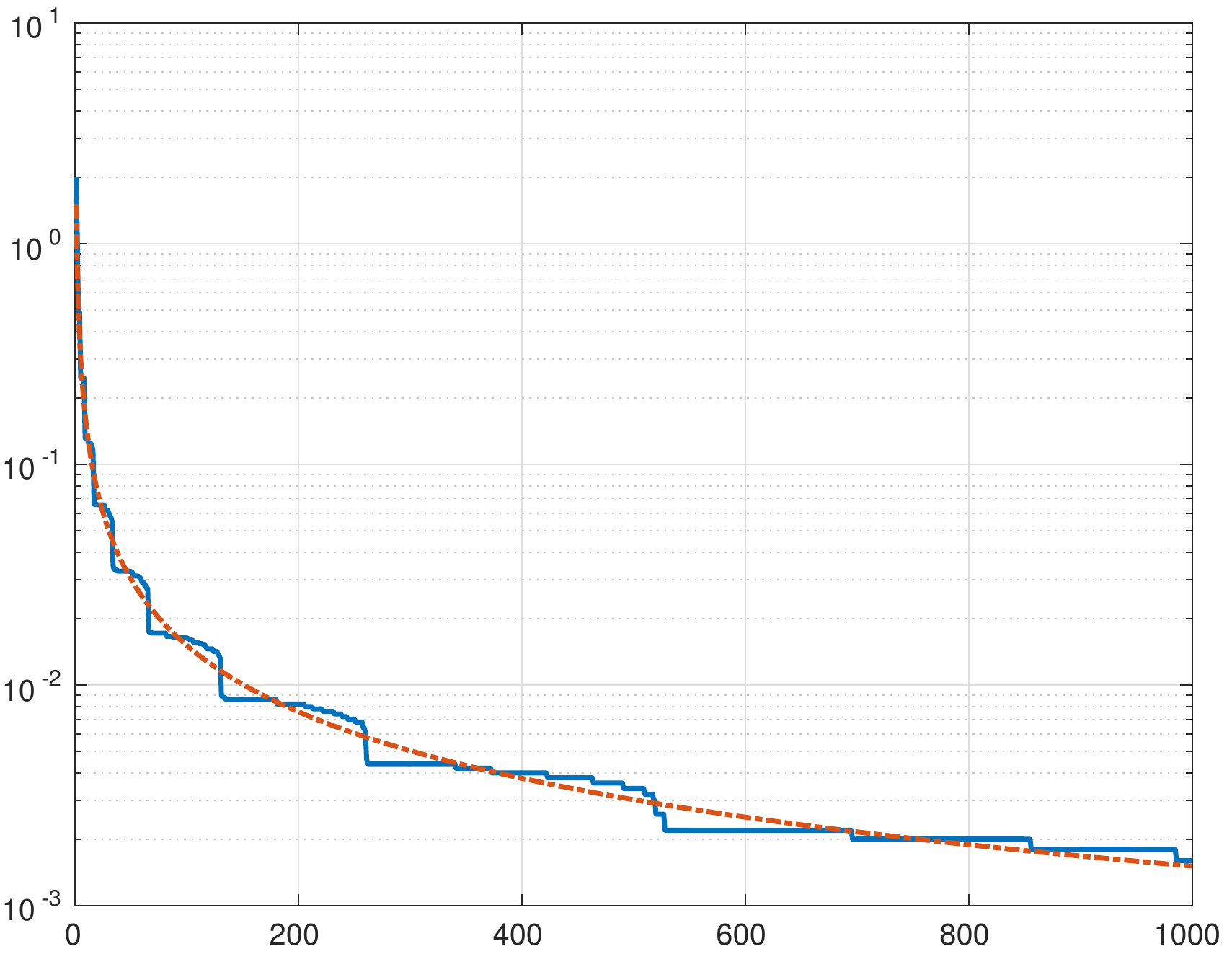}&
\includegraphics[width=0.3 \textwidth]{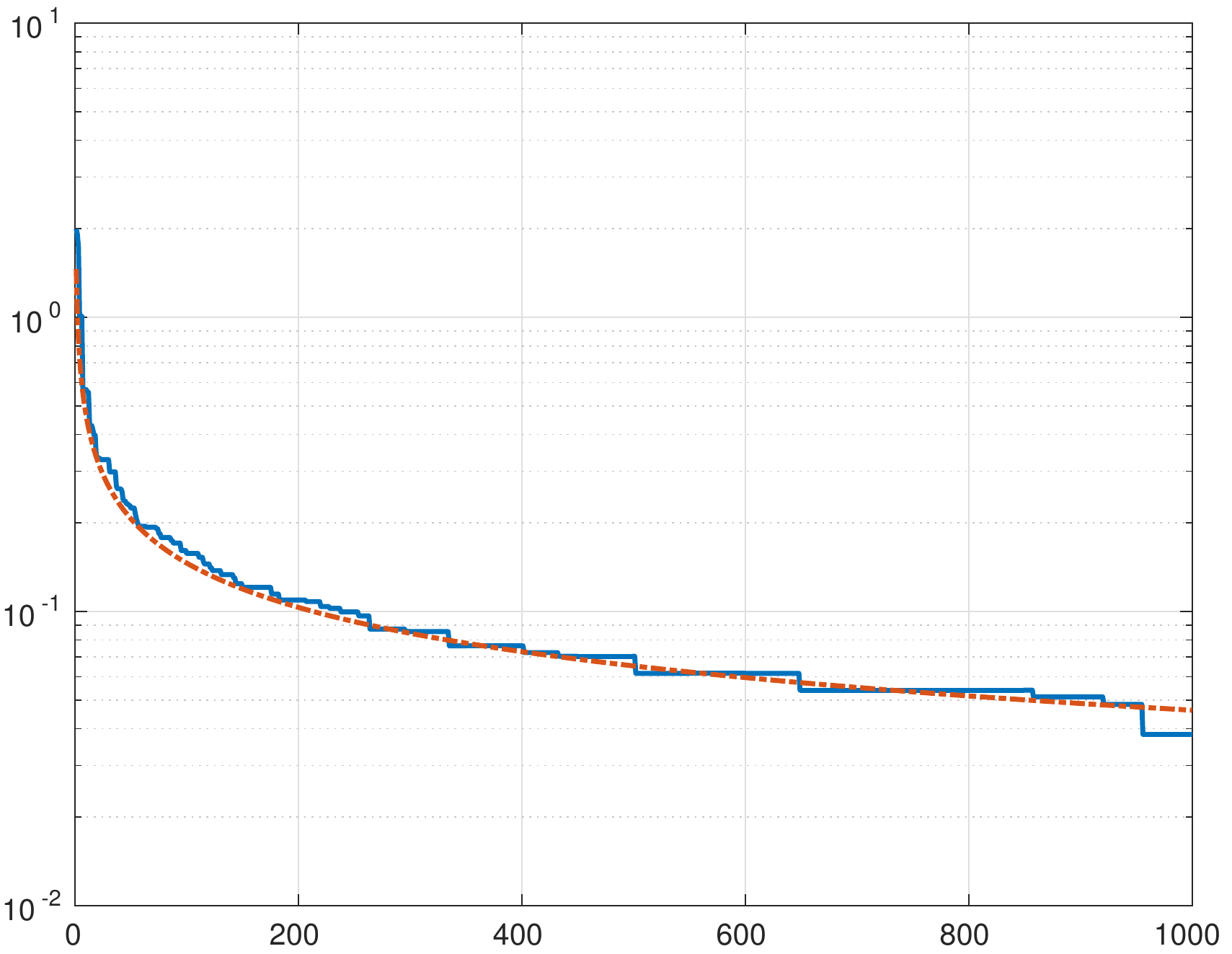}&
\includegraphics[width=0.3 \textwidth]{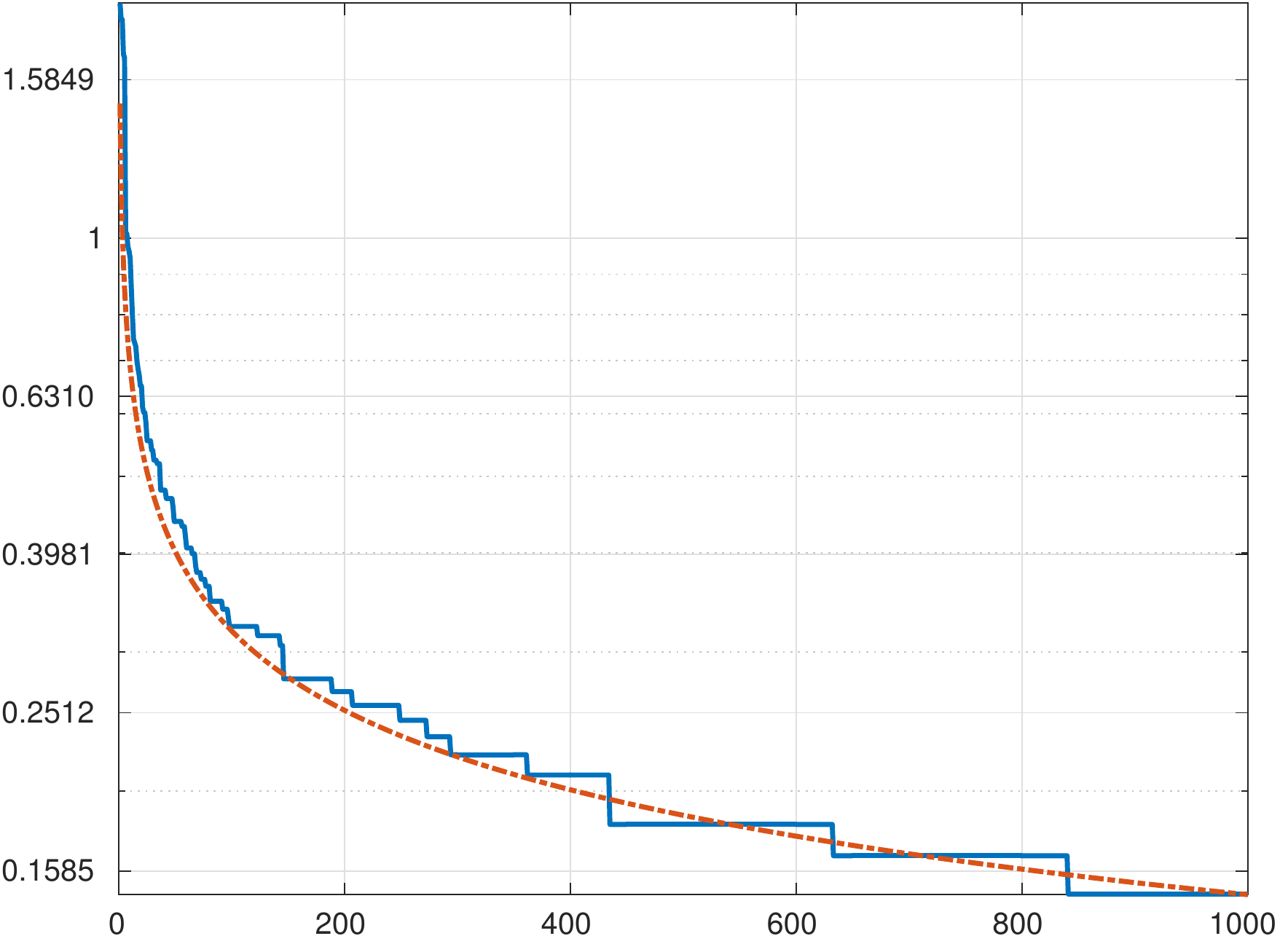}\\
\includegraphics[width=0.3 \textwidth]{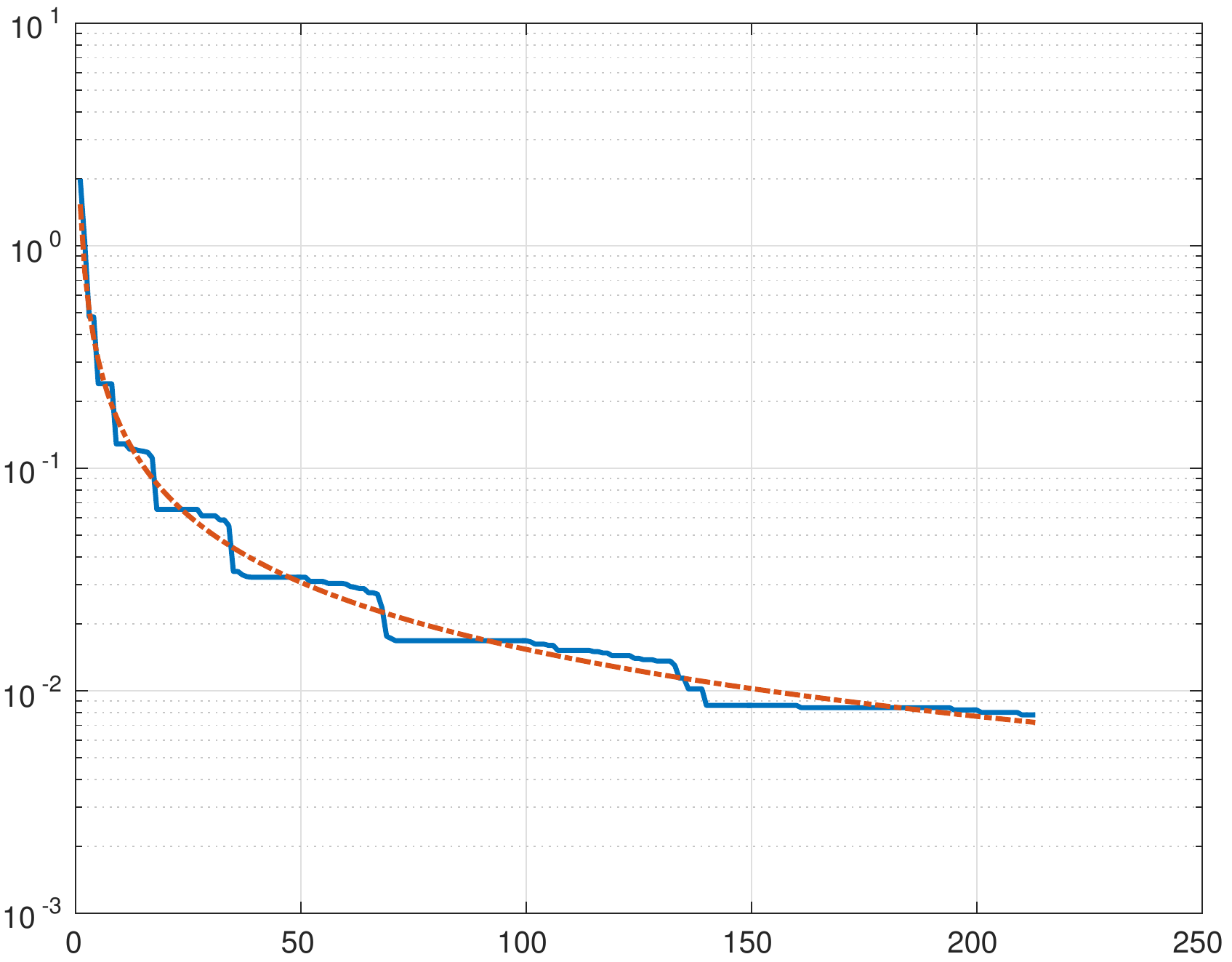}&
\includegraphics[width=0.3 \textwidth]{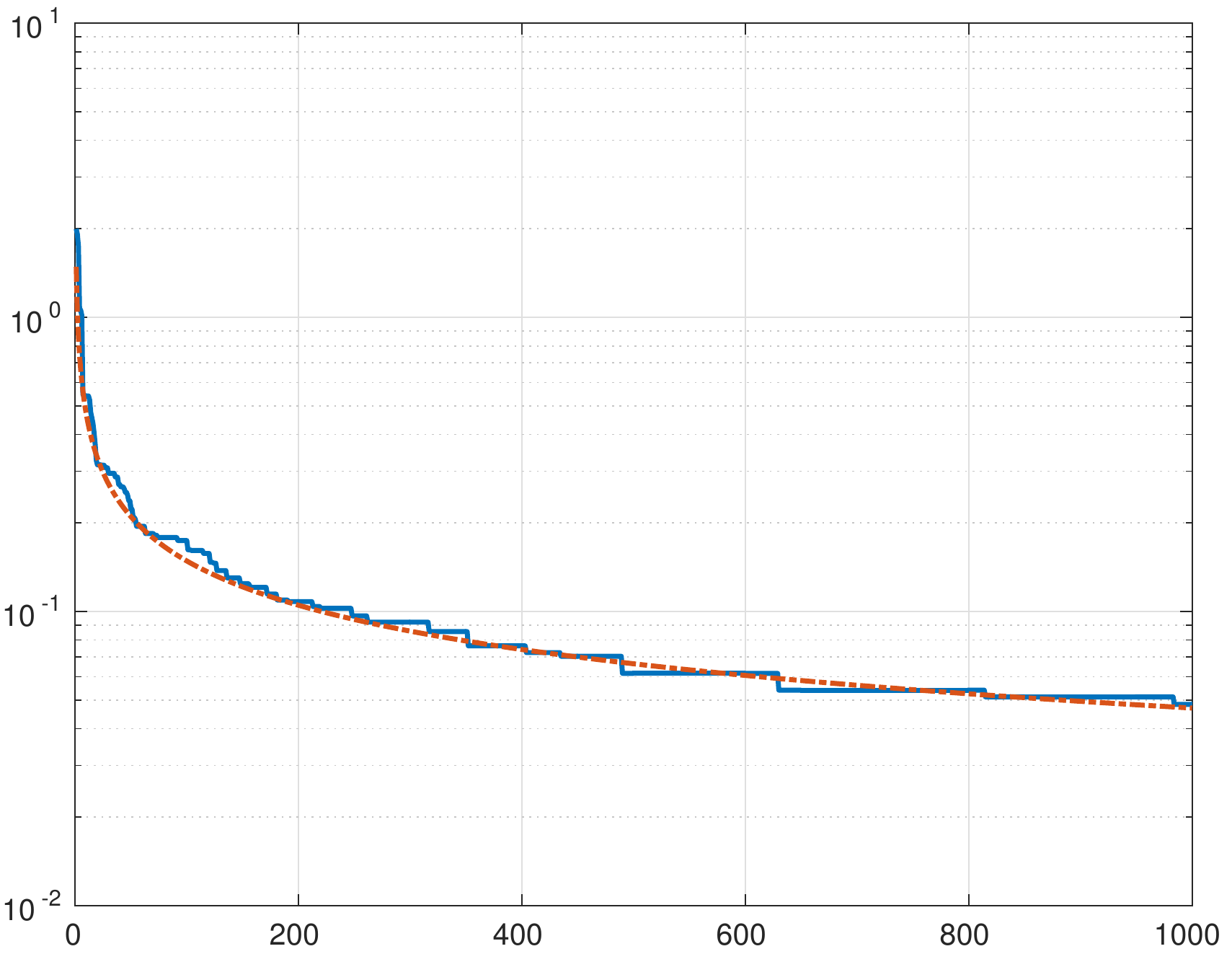}&
\includegraphics[width=0.3 \textwidth]{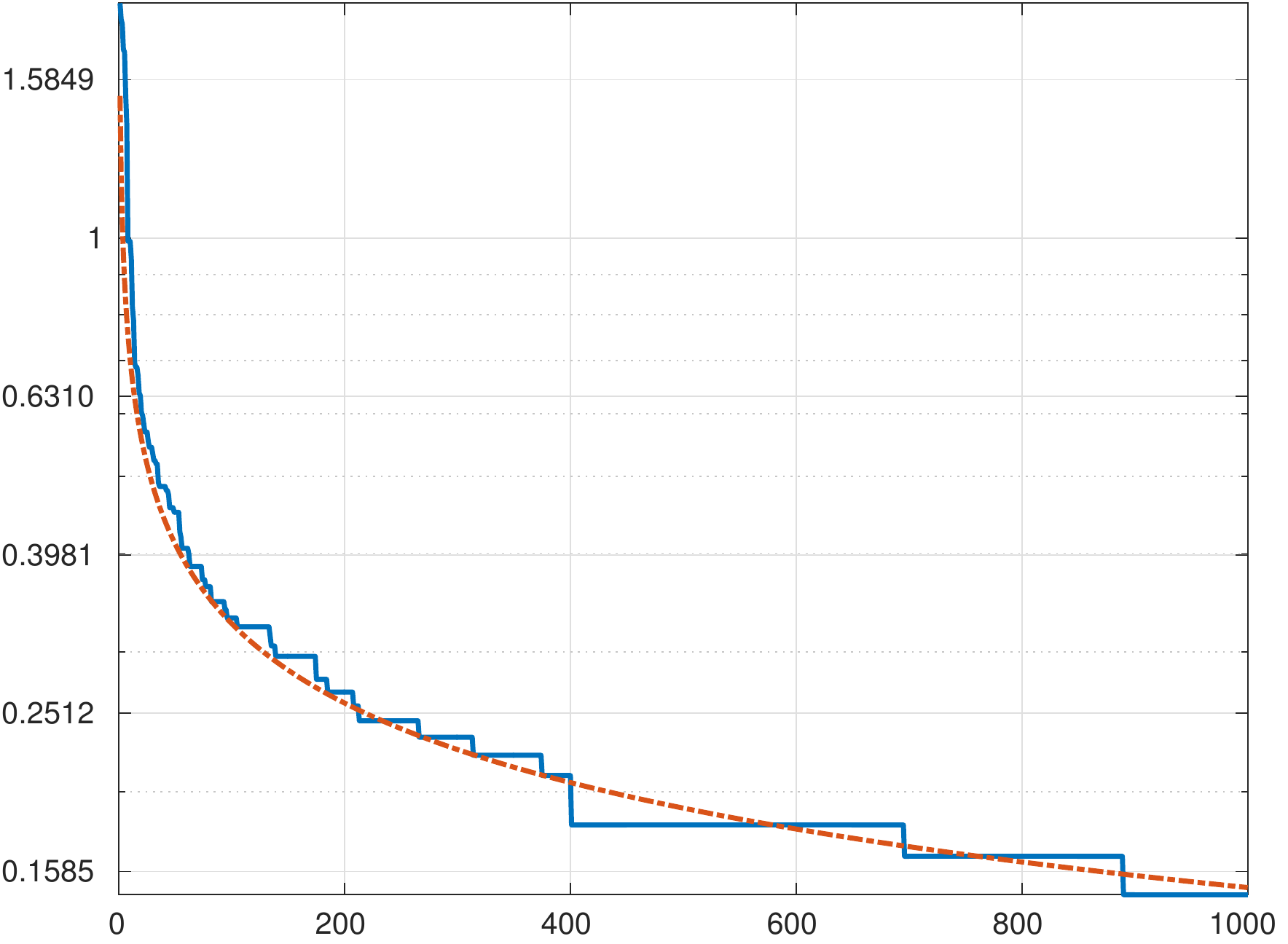}
\end{tabular}
\caption{Expected decay of the fill distance (red dotted lines) and computed decay (solid blue lines), 
with the setting described in Section \ref{sec:numerics} for the Wendland kernels with $\beta = 2, 3$ (from top to bottom) and $d = 1, 2, 3$ (from left to 
right).
}\label{fig:filldistance}
\end{figure}


\vspace*{1cm}
\textbf{Acknowledgements:} We thank Dominik Wittwar for fruitful discussions.

\bibliographystyle{abbrv}
\bibliography{anm}

\end{document}